\def\Xint#1{\mathchoice
{\XXint\displaystyle\textstyle{#1}}%
{\XXint\textstyle\scriptstyle{#1}}%
{\XXint\scriptstyle\scriptscriptstyle{#1}}%
{\XXint\scriptscriptstyle\scriptscriptstyle{#1}}%
\!\int}
\def\XXint#1#2#3{{\setbox0=\hbox{$#1{#2#3}{\int}$}
\vcenter{\hbox{$#2#3$}}\kern-.5\wd0}}
\def\dashint{\Xint-}
\newtheorem{theor}{Theorem}
\newtheorem{lemma}{Lemma}
\newtheorem{corollary}{Corollary}
\theoremstyle{remark}
\newtheorem{remark}{Remark}
\newtheorem{definition}{Definition}
\numberwithin{equation}{section}
\newcommand{\qqq}{\quad\quad\quad}
\newcommand{\abs}[1]{\left\lvert#1\right\rvert}
\newcommand{\norm}[2]{{\left\| #1 \right\|}_{#2}}
\newcommand{\bey}{\begin{eqnarray*}}
\newcommand{\eey}{\end{eqnarray*}}
\newcommand{\ra}{\rightarrow}
\newcommand{\al}{\alpha}
\newcommand{\ep}{\epsilon}
\newcommand{\vp}{\varphi}
\newcommand{\rn}{{\mathbf R}^n}
\newcommand{\rr}{\mathbf R}
\newcommand{\Z}{\mathbf Z}
\newcommand{\N}{\mathbf N}
\newcommand{\cc}{\mathbf C}
\newcommand{\cs}{\mathcal S}
\newcommand{\cm}{\mathcal M}
\newcommand{\cd}{\mathcal D}
\newcommand{\diam}{{\rm{diam}}}
\newcommand{\fty}{\vec{f}(\vec{y})}
\newcommand{\vf}{\vec{f}}
\newcommand{\vy}{\vec{y}}
\newcommand{\dy}{\,\, d\mu(\vec{y})}
\begin{document}

%\dedicatory{}

\subjclass[2000]{Primary 26D10, 31B10; Secondary 46E35}

\keywords{Sub-elliptic Poincar\'e inequalities, multilinear operators, H\"ormander vector fields}

\address{Diego Maldonado, Department of Mathematics, Kansas State University, 138 Cardwell Hall,
Manhattan, KS-66506, USA.} \email{dmaldona@math.ksu.edu}

\address{Kabe Moen, Department of Mathematics,
Washington University in St. Louis, St. Louis, MO-63130-4899,
USA.}\email{moen@math.wustl.edu}

\address{Virginia Naibo, Department of Mathematics, Kansas State University, 138 Cardwell Hall,
Manhattan, KS-66506, USA.} \email{vnaibo@math.ksu.edu}

\thanks{First author partially supported by the NSF under grant DMS 0901587.}

\title[Weighted multilinear Poincar\'e inequalities]{Weighted multilinear Poincar\'e inequalities for vector fields of H\"ormander type}
\author{Diego Maldonado \and Kabe Moen \and Virginia Naibo}
\date{\today}

\begin{abstract}
As the classical $(p,q)$-Poincar\'e inequality is known to fail for $0 < p < 1$, we introduce the notion of weighted multilinear Poincar\'e inequality as a natural alternative when $m$-fold products and $1/m < p$ are considered. We prove such weighted multilinear Poincar\'e inequalities in the subelliptic context associated to vector fields of H\"ormader type. We do so by establishing multilinear representation formulas and weighted estimates for multilinear potential operators in spaces of homogeneous type.
\end{abstract}

\maketitle

\bigskip

\section{Introduction and main result}

The classical Poincar\'e inequality
\begin{equation}\label{poincare}
\left(\int_{B}\abs{u(x)-u_B}^q\,dx\right)^{1/q}\le C\,
\left(\int_{B}\abs{\nabla u(x)}^p\,dx\right)^{1/p},\qquad u\in
C^1(\overline{B}),
\end{equation}
where $B$ is an Euclidean ball in $\rn$ and
$u_B=\frac{1}{\abs{B}}\int_B u(x)\,dx,$ holds when $1\le p<n$ and $
q=\frac{np}{n-p}.$ However, simple examples prove that this inequality is
false for every $0<p<1,$ see for instance Buckley-Koskela~\cite[p.224]{BK94}, where it is shown that even the following weaker version of
\eqref{poincare} fails for $0<p<1$ and any Euclidean ball $B\subset
\rn,$
\begin{equation}\label{PoinWeak}
\inf\limits_{a \in \rr} \left( \int_B |u(x) - a|^q \, dx \right)^{1/q} \leq C \left(\int_B |\nabla u(x)|^p \, dx\right)^{1/p}.
\end{equation}
We mention in passing that \eqref{poincare} does hold for some $0<p<1 $ if  $u$ satisfies extra
conditions such as being a solution to a suitable elliptic PDE
(Haj{\l}asz-Koskela~\cite{HK00}, Chapter 13) or having $|\nabla u|$
bounded by a weight with a weak reverse H\"older inequality
(Buckley-Koskela~\cite{BK94}).

We now focus on the case $u=fg$ with $f,\,g\in C^1{(\overline{B})}.$ By the previous comment,
the following Poincar\'e inequality for the product of two functions
\begin{equation}\label{Poinpq}
\inf\limits_{a \in \rr} \left( \int_B |(fg)(x) - a|^q \, dx \right)^{1/q} \leq C \left(\int_B |\nabla(fg)(x)|^p \, dx\right)^{1/p},
\end{equation}
($C$ independent of $f$ and $g$), also fails for every $0 < p < 1$ and every Euclidean ball $B$. On the other hand, note that for any numbers $0 < p, r, s, \tilde{r}, \tilde{s} < \infty$ with
\begin{equation}\label{holderrelation}
\frac{1}{p} = \frac{1}{r} + \frac{1}{s} = \frac{1}{\tilde{r}} + \frac{1}{\tilde{s}},
\end{equation}
the inequality
\begin{align}\label{holderrs}
\left(\int_B |\nabla(fg)|^p \right)^{1/p}& \lesssim \left(\int_B |\nabla f|^r \, dx\right)^{1/r} \left(\int_B |g|^s \, dx\right)^{1/s}\\\nonumber
 & +  \left(\int_B |f|^{\tilde{r}} \, dx\right)^{1/{\tilde{r}}} \left(\int_B |\nabla g|^{\tilde{s}} \, dx\right)^{1/{\tilde{s}}},
\end{align}
with constant depending only on $p$, holds as a consequence of
H\"older's inequality. Hence,  a natural alternative to \eqref{Poinpq}
for arbitrary functions $f$ and $g$ and $0<p<1$ is given by the
inequality
\begin{align}\label{appl}
& \inf_{a\in\rr}\left( \int_B |(fg)(x) - a|^q \, dx
\right)^{1/q}\\\nonumber & \leq C \left( \left(\int_B |f|^r \,
dx\right)^{1/r} \left(\int_B |\nabla g|^s \, dx\right)^{1/s}+
\left(\int_B |\nabla f|^{\tilde{r}} \, dx\right)^{1/{\tilde{r}}}
\left(\int_B |g|^{\tilde{s}} \, dx\right)^{1/{\tilde{s}}} \right),
\end{align}
or the following stronger inequality, which we will call {\it bilinear Poincar\'e
inequality}
\begin{align}\label{appl1}
& \left( \int_B |(fg)(x) - f_Bg_B|^q \, dx \right)^{1/q}\\\nonumber
& \leq C \left( \left(\int_B |f|^r \, dx\right)^{1/r} \left(\int_B
|\nabla g|^s \, dx\right)^{1/s}+ \left(\int_B |\nabla f|^{\tilde{r}}
\, dx\right)^{1/{\tilde{r}}} \left(\int_B |g|^{\tilde{s}} \,
dx\right)^{1/{\tilde{s}}} \right),
\end{align}
where $p,\,r,\,s,\,\tilde{s},\,\tilde{r}$ are related through
\eqref{holderrs} and $C$ is independent of $f$, $g$, and $B$.\\

The purpose of this article is to derive weighted inequalities  of the type
\eqref{appl1}  where $p$ is allowed to be bigger than $1/2$ and, more generally, $p > 1/m$ when $m$ factor functions are involved. Moreover, we do so in the subelliptic setting associated to vector fields satisfying H\"ormander's condition. Since D. Jerison's fundamental work \cite{J86} on subelliptic Poincar\'e inequalities, the research on Poincar\'e-type inequalities in stratified groups and more general Carnot-Carath\'eodory structures has continued to gain substantial momentum, see, for instance, \cite{BKL95, CDG94, CDG97, DGPa, DGPb, FGW94, FLW95, FLW95b, FLW96, FW99, GN96, HK00, Lu92, Lu94, LW98a, LW98b, LW00, MSal95, PW01, SW92} and references there in. In particular, Buckley-Koskela-Lu \cite{BKL95} have established the validity of weighted versions of \eqref{poincare} for $0 < p < 1$ in the Carnot-Carath\'eodory setting under the assumption that the subelliptic gradient of $u$ satisfies a weak reverse H\"older condition. Along these lines, our main result is motivated by the exploration of inequalities such as \eqref{appl1} and the search for a substitute to \eqref{poincare} in the case $0 < p < 1$, also in the general Carnot-Carath\'eodory setting,  when the function $u$ in question is an $m$-fold product of differentiable functions, with no extra assumptions. Namely, we prove

\begin{theor}\label{poincareineqTheorem2w} Let $m \in \N,$ $\frac{1}{m}<p\le q<\infty$ and $1<p_1, \cdots,
 p_m <\infty$ such that
 $ \frac{1}{p}=\frac{1}{p_1}+\cdots+\frac{1}{p_m}.$
For a connected bounded open set $\Omega\subset \rn,$ let $Y = \{Y_k\}_{k=1}^M$ be a collection of vector fields on $\Omega$ verifying
H\"ormander's condition and denote by $\rho$ the associated
Carnot-Carath\'eodory metric. Let  $u, \, v_k,$ $k=1,\cdots, m,$  be
weights defined on an open set $\Omega_0\subset\subset\Omega$
and satisfying condition \eqref{2wq>1} if $q>1$ or condition
\eqref{2wq<1} if $q\le 1,$ where

\begin{equation}\label{2wq>1}
\sup_{B = B_\rho(x,r),\, x \in \overline\Omega_0}
\diam_\rho(B) \abs{B}^{1/q - 1/p} \left( \frac{1}{\abs{B}}\int_B
u^{qt} dx \right)^{1/q t} \prod_{j=1}^m \left(
\frac{1}{\abs{B}}\int_B v_i^{-t p_i'} dx \right)^{1/t p_i'} <
\infty,
\end{equation}
for some $t> 1,$

\medskip

\begin{equation}\label{2wq<1}
\sup_{B = B_\rho(x,r),\, x \in \overline\Omega_0}
\diam_\rho(B) \abs{B}^{1/q - 1/p} \left( \frac{1}{\abs{B}}\int_B
u^{q} dx \right)^{1/q} \prod_{j=1}^m \left( \frac{1}{\abs{B}}\int_B
v_i^{-t p_i'} dx \right)^{1/t p_i'} < \infty,
\end{equation}
for some $t > 1$, where $|B|$ denotes the Lebesgue measure of the $\rho$-ball $B$. Then, there exist positive constants $r_0$ and $C$ such that for all $\rho$-ball $B\subset\overline{\Omega_0}$ with radius less than $r_0$ and for all $f_k\in C^1(\overline{B}),$ $k=1,\cdots,m,$  the following weighted $m$-linear subelliptic Poincar\'e inequality holds true
\begin{align}\label{poincareineq}
&\left(\int_B\left(\abs{\prod_{k=1}^m f_k-\prod_{k=1}^m {f_k}_B}u\right)^q\,dx\right)^{1/q}\\&\le C\,
\sum_{k=1}^m\left(\int_B\left(\abs{Yf_k}
v_k\right)^{p_k}\,dx\right)^{1/p_k}\prod_{i\neq
k}\left(\int_B\left(\abs{f_i}
v_i\right)^{p_i}\,dx\right)^{1/p_i}, \nonumber
\end{align}
where
$$
{f_k}_B = \frac{1}{|B|} \int_B f_k(x) \, dx, \quad k = 1, \ldots, m.
$$
\end{theor}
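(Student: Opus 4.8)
The plan is to reduce the multilinear Poincaré inequality to a pointwise bound by a multilinear potential operator, and then to invoke the weighted estimates for such operators in spaces of homogeneous type that (per the excerpt) are established earlier in the paper.

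\medskip

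\textbf{Step 1: Multilinear representation formula.} First I would establish, on a fixed $\rho$-ball $B$ of small radius, a pointwise representation of the oscillation $\prod_k f_k(x) - \prod_k {f_k}_B$ in terms of the subelliptic gradients $Y f_k$ and the functions $f_i$. The starting point is the Leibniz-type telescoping identity
\begin{equation*}
\prod_{k=1}^m f_k(x) - \prod_{k=1}^m f_k(y) = \sum_{k=1}^m \Bigl(\prod_{i<k} f_i(y)\Bigr)\bigl(f_k(x)-f_k(y)\bigr)\Bigl(\prod_{i>k} f_i(x)\Bigr),
\end{equation*}
combined with the known single-function subelliptic representation formula of Jerison/Franchi--Lu--Wheeden type, which bounds $|f_k(x)-f_k(y)|$ (after averaging $y$ over $B$) by a fractional integral of $|Y f_k|$ along Carnot--Carathéodory geodesics. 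Averaging in $y$ and taking absolute values, this yields a pointwise estimate of the form
\begin{equation*}
\abs{\prod_{k=1}^m f_k(x) - \prod_{k=1}^m {f_k}_B} \lesssim \sum_{k=1}^m \mathcal{I}\bigl(|Yf_k|\chi_B\bigr)(x)\,\prod_{i\neq k}\bigl(|f_i|\text{-type average or pointwise factor}\bigr),
\end{equation*}
where $\mathcal{I}$ is a potential operator with kernel comparable to $\rho(x,y)\,|B_\rho(x,\rho(x,y))|^{-1}$. The genuinely multilinear potential object one wants to end up with has kernel $\prod$ of such factors; assembling it correctly so that each $f_i$, $i\neq k$, is also integrated against $B$ (rather than appearing as an $L^\infty$ factor) is the delicate bookkeeping in this step, and it is here that one uses that the functions are $C^1$ on $\overline B$ together with a second application of the fundamental-theorem-of-calculus estimate to the $f_i$'s if needed, or simply a crude bound $|f_i(x)|\le |f_i(y)| + \mathcal I(|Yf_i|)(x)$ followed by integration.

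\medskip

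\textbf{Step 2: Apply the weighted multilinear potential estimates.} Having dominated the left-hand side of \eqref{poincareineq} by $u$ times a sum of multilinear potential operators applied to $(|Yf_k|, |f_1|,\dots,\widehat{|f_k|},\dots,|f_m|)$, I would invoke the weighted boundedness theorem for multilinear potential operators on spaces of homogeneous type proved earlier in the paper. The hypotheses \eqref{2wq>1} and \eqref{2wq<1} are precisely the bump-type testing conditions on the weight tuple $(u, v_1,\dots,v_m)$ that make such an operator bounded from $\prod L^{p_k}(v_k^{p_k})$ into $L^q(u^q)$, with the extra geometric factor $\diam_\rho(B)\,|B|^{1/q-1/p}$ accounting for the order-one potential and the off-diagonal scaling $\frac1m<p\le q$. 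The split into $q>1$ and $q\le1$ corresponds to whether one can afford the $L^{qt}$-bump on $u$ or must instead take the bare $L^q$-average, a standard dichotomy in the $p<1$ theory.

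\medskip

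\textbf{Step 3: Localization and choice of $r_0$.} Finally, the restriction to balls of radius below a threshold $r_0$, and the passage from $\Omega_0$ to $\Omega$, is needed so that the Carnot--Carathéodory balls behave well: the doubling property, the segment/geodesic property connecting points inside $B$ without leaving $\Omega$, and the representation formula of Step 1 all require that $B$ (and a fixed dilate of it) stay compactly inside $\Omega$, which is guaranteed for small $r_0$ since $\Omega_0\subset\subset\Omega$. One then simply tracks the constant through Steps 1--2.

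\medskip

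I expect \textbf{Step 1} to be the main obstacle: converting the single-variable subelliptic representation formula into a clean $m$-linear pointwise bound whose potential kernel matches exactly the operator covered by the weighted estimate — in particular, ensuring every $f_i$ with $i\neq k$ is absorbed into an integral over $B$ against its weight $v_i$, rather than left as an unbounded pointwise factor — is where the multilinear structure must be handled with care. Once the pointwise domination by the right multilinear potential is in place, Step 2 is a direct citation and Step 3 is routine.
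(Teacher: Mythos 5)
Your Step 1 contains a genuine gap, and it is precisely at the point you flag as ``the delicate bookkeeping.'' Starting from the telescoping identity
\[
\prod_{k=1}^m f_k(x) - \prod_{k=1}^m {f_k}_B = \sum_{k=1}^m \Bigl(\prod_{i<k} {f_i}_B\Bigr)\bigl(f_k(x)-{f_k}_B\bigr)\Bigl(\prod_{i>k} f_i(x)\Bigr)
\]
and inserting the linear subelliptic representation formula $|f_k(x)-{f_k}_B|\lesssim \mathcal{I}_{B,1}(|Yf_k|)(x)$, you are left with the pointwise factors $\prod_{i>k}|f_i(x)|$ multiplying the potential. These are \emph{not} integrated over $B$ and cannot be absorbed into the multilinear potential operator $\mathcal{I}_{B,1}(f_1\chi_B,\dots,Yf_k\chi_B,\dots,f_m\chi_B)$ whose weighted bounds the conditions \eqref{2wq>1} and \eqref{2wq<1} are calibrated to. Your suggested repair, $|f_i(x)|\le|f_i(y)|+\mathcal{I}(|Yf_i|)(x)$, introduces extra terms involving $Yf_i$ for $i\neq k$ that are not present on the right-hand side of \eqref{poincareineq}, and it is not clear how to recombine those into the required multilinear-potential form; nor do Hölder splittings recover the correct off-diagonal exponent relation $1/p-1/q$ with the given weight conditions. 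In short, the telescoping-plus-linear-formula approach does not produce the pointwise domination by a genuine multilinear potential.

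The paper resolves this by a different and crucial device that your proposal does not mention: it applies the \emph{linear} representation formula not on $\Omega$, but on the \emph{product} Carnot--Carath\'eodory space $\Omega^m$ equipped with the union vector field $\tilde Y$ (whose associated metric, by the Lu--Wheeden product lemma, is $\tilde\rho(\vec x,\vec y)=\max_k\rho(x_k,y_k)$, so that $\rho$-balls cube up to $\tilde\rho$-balls). Applying the linear formula to $f(\vec y)=\prod_k f_k(y_k)$ on the ball $B^m$ and then \emph{restricting to the diagonal} $\vec x=(x,\dots,x)$ yields
\[
\Bigl|\prod_k f_k(x)-\prod_k {f_k}_B\Bigr|\lesssim \sum_{k=1}^m \mathcal{I}_{B,1}\bigl(f_1\chi_B,\dots,(Yf_k)\chi_B,\dots,f_m\chi_B\bigr)(x),
\]
with every $f_i$, $i\neq k$, appearing inside an integral over $B$. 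This is exactly the clean multilinear domination that makes Step 2 a direct citation of the weighted estimate (after verifying, uniformly in $B$, the doubling, reverse-doubling, kernel-growth, and $\varphi$-functional hypotheses for the space $(B,\rho,\text{Lebesgue})$). Your Steps 2 and 3 are essentially correct as outlined, but without the product-space/diagonal-restriction idea the argument does not close.
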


\begin{remark} We point out that Theorem \ref{poincareineqTheorem2w}, as well as notion of weighted multilinear Poincar\'e inequality \eqref{poincareineq}, are new even in the Euclidean setting. When $m=2$, Theorem \ref{poincareineqTheorem2w} provides a substitute to \eqref{Poinpq} for $p > 1/2$, and, in general, for $p$ as close to $0$ as desired, as long as $m$ factor functions, with $m>1/p$, are considered.

\end{remark}

\begin{remark} It is clear that, when $p < 1$, inequality \eqref{appl1} cannot follow from an application of the linear Poincar\'e inequality \eqref{poincare}. In addition, \eqref{appl1}  does not seem to follow (at least in
a straightforward way) from an application of the linear Poincar\'e
inequality even in cases when $ p > 1$. Indeed, to illustrate why
the linear approach breaks down, in the Euclidean setting consider the particular choices
$n=2$, $r=\tilde{r} = 2$, $s=\tilde{s}=4$, and $p=4/3$, which yields
$q=4$. If we write
\begin{equation}
|f(x)g(x) - f_B g_B|^4 \lesssim |f(x)|^4 |g(x) - g_B|^4 + |g_B|^4 |f(x) - f_B|^4
\end{equation}
and use, for instance, H\"older's inequality with any auxiliary index $l \geq 1$ in the first summand to get
$$
\left( \int_B |f(x)|^4 |g(x) - g_B|^4 \, dx\right)^{1/4} \leq \left(\int_B |f(x)|^{4l'} \, dx\right)^{1/4l'} \left(\int_B |g(x) - g_B|^{4l}\, dx \right)^{1/4l}
$$
we realize that it is impossible to utilize a linear Poincar\'e inequality of the type
\begin{equation}\label{impossiblePoin}
\left(\int_B |g(x) - g_B|^{4l} \, dx\right)^{1/4l} \leq C \left(\int_B |\nabla g|^{s} \, dx\right)^{1/s},
\end{equation}
since
$$
\frac{1}{s}- \frac{1}{n} = \frac{1}{4}- \frac{1}{2} = -\frac{1}{4} \ne \frac{1}{4l}
$$
for any $l \geq 1$. We then notice that any attempt to use a linear
Poincar\'e inequality with these exponents $s$ and $r$ will be
unsuccessful, since in this example we have $1/s - 1/n < 0$ and $1/r
- 1/n = 0$. As opposed to separately considering the fractions $1/r$
and $1/s$, the bilinear approach is based on the sum $1/r + 1/s$,
which verifies
\[
\frac{1}{r} + \frac{1}{s} - \frac{1}{n} = \frac{1}{4} = \frac{1}{q}.
\]
Also, if we try a different way and write
\[
|f(x)g(x) - f_B g_B|^4 \lesssim |f(x)g(x) - (fg)_B|^4 + |(fg)_B - f_B g_B|^4,
\]
then the linear Poincar\'e inequality allows to control the first summand by
\[
\left( \int_B |f(x)g(x) - (fg)_B|^4 \, dx \right)^{1/4} \leq C \left( \int_B |\nabla (fg)(x)|^{4/3} \, dx \right)^{3/4},
\]
which, in turn, can be bounded as in \eqref{holderrs}. However, given any $l \geq 1$, for the constant term $|(fg)_B - f_B g_B|$ we have,
\begin{align*}
|(fg)_B - f_B g_B|&= \frac{1}{|B|} \left|\int_B f(x) (g(x)-g_B) \, dx\right| \leq \frac{1}{|B|} \int_B |f(x)| |g(x)-g_B| \, dx\\
& \leq   \left(\frac{1}{|B|} \int_B |f(x)|^{l'} \, dx\right)^{1/l'} \left(\frac{1}{|B|} \int_B |g(x)-g_B |^{l} \, dx\right)^{1/l},
\end{align*}
so that
\begin{align*}
\int_B |(fg)_B - f_B g_B|^4 \, dx& \leq |B| \left(\frac{1}{|B|} \int_B |f(x)|^{4l'} \, dx\right)^{1/{l'}} \left(\frac{1}{|B|} \int_B |g(x)-g_B |^{4l} \, dx\right)^{1/l}\\
&= \left(\int_B |f(x)|^{4l'} \, dx\right)^{1/{l'}} \left(\int_B |g(x)-g_B |^{4l} \, dx\right)^{1/l},
\end{align*}
where we used Jensen's inequality to avoid loose powers of $|B|$. We
now see that if we intend to bound the last term by means of the
linear Poincar\'e inequality, we run into the same problem as in
\eqref{impossiblePoin} since $1/s - 1/n < 0 \neq 1/4l$.

In a sense, controlling the oscillation $|f(x)g(x) - f_B g_B|$  (rather than the oscillation $|f(x)g(x) - (fg)_B|$) requires bilinear
methods, even for some $p$ larger than $1$.
\end{remark}

\begin{remark}  In the linear case $(m=1)$, representation formulas and Poincar\'e inequalities imply embedding theorems on Campanato-Morrey spaces, see, for instance, Lu ~\cite{Lu95, Lu98} for such embeddings in the Carnot-Carath\'eodory context. In order to illustrate the multilinear analogs of these embeddings associated to Theorem \ref{poincareineqTheorem2w}, let us focus on the Euclidean setting and the bilinear case $m=2$.  Let $w \geq 0$ be a weight and for  $p, \lambda > 0$ and $f \in L^1_{loc}(\rn, w^p)$, $f$ is said to belong to the weighted
Morrey space $L^{p,\lambda}(w)$ if
$$
\norm{f}{L^{p,\lambda}(w)} = \sup_{B} \left(\frac{1}{|B|^{\lambda/n}} \int_B |f(x) w(x)|^p\, dx \right)^{1/p} < \infty,
$$
and $f$ is said to belong to the weighted Campanato space $\mathcal{L}^{p,\lambda}(w)$ if
$$
\norm{f}{\mathcal{L}^{p,\lambda}(w)} = \sup_{B} \inf_{a \in \cc} \left( \frac{1}{|B|^{\lambda/n}}\int_B \left(|f(x) - a| w(x)\right)^p \, dx\right)^{1/p}  < \infty.
$$
Then, Theorem \ref{poincareineqTheorem2w} (with $m=2$ and in the Euclidean setting) implies a variety of weighted inequalities of the form
\begin{equation}\label{KPi}
\norm{fg}{\mathcal{L}^{p,\lambda}(w)} \lesssim \norm{\nabla f}{L^{p_1,\lambda_1}(u)} \norm{g}{L^{p_2,\lambda_2}(v)} +  \norm{ f}{L^{p_1,\lambda_1}(u)} \norm{\nabla g}{L^{p_2,\lambda_2}(v)},
\end{equation}
for a larger class of weights $u, v, w$ (and, therefore, a larger range of indices $p, \lambda, p_1, \lambda_1$, $p_2$, and $\lambda_2$) than one could possibly obtain by iteration of the linear weighted estimates and H\"older's inequality. See remark \ref{rmk:moreweights}.

Inequalities of the form \eqref{KPi} are related to the so-called Kato-Ponce inequality, where the $L^p$-norm of the derivative of the product is being replaced by another measure of the oscillation (i.e., the Campanato norm) of the product, and the Morrey spaces play the role of the Lebesgue spaces.
\end{remark}

Regarding the organization of the article, we prove Theorem
\ref{poincareineqTheorem2w} in \S~\ref{secc:proofmain} after conveniently adapting the usual approach to the classical Poincar\'e inequality~\eqref{poincare}. That is, by proving a multilinear analog to the representation formula
\[
\abs{f(x)-f_B}\lesssim I_{B,1}(\abs{\nabla f})(x), \qquad x\in B,
\]
where $I_{B,1}(h)(x)=\int_B h(y)\abs{x-y}^{1-n}\,dy$ (see Corollary \ref{multrepformula} in \S~\ref{secc:proofmain}). Then, in \S~\ref{potop} we use the framework of spaces of homogeneous type to introduce a class of multilinear potential operators that includes the multilinear counterpart to $I_{B,1}$ and we establish their weighted Lebesgue estimates in \S~\ref{secc:proofialpha}. These weighted estimates are further conveyed into the context of Orlicz spaces in \S~\ref{secc:multiOrl}, producing natural multilinear alternatives to their linear counterparts and allowing for a strictly wider range of indices, see Theorem \ref{orlicz2w} and Remark \ref{rmk:orlp<1}.

\medskip

\noindent{\bf Acknowledgments.} The authors would like to thank Carlos P\'erez and Rodolfo Torres for useful conversations regarding the topics in this article. Part of the work presented here originated in the interaction among the authors
that took place during the Prairie Analysis Seminar 2008 in Lawrence, Kansas. The participation of the authors in such conference was supported in part by the National Science Foundation grant DMS 0848357.

\section{Multilinear potential operators in spaces of homogeneous
type}\label{potop}

We introduce in this section the theory of multilinear potential operators in the ample context of spaces of homogeneous type and
state their weighted boundedness properties.

Recall that a space of homogeneous type (in the sense of Coifman-Weiss \cite{CW71}) is a triple $(X,\rho,\mu),$
where $X$ is a nonempty set, $\rho$ is a quasi-metric defined on
$X,$ that satisfies
\begin{equation}\label{quasi}
\rho(x,y)\leq \kappa(\rho(x,z)+\rho(z,y))\qquad x,y,z\in X
\end{equation}
for some $\kappa\geq 1$ and $\mu$ is a Borel measure on $X$ (with respect to the
topology defined by $\rho$) such that there exists a constant $L\ge
0$ verifying
\begin{equation}\label{doublingMu}
0<\mu(B_\rho(x,2r)\le L\,\mu(B_\rho(x,r))<\infty
\end{equation}
for all $x\in X$ and $0<r<\infty,$ and where $B_\rho(x,r)=\{x\in
X:\rho(x,y)<r\}$ is the $\rho$-ball of center $x$ and radius $r.$
It can be assumed without loss of generality that the
$\rho$-balls are open subsets of $X$, see \cite{MS}. Given a ball $B=B_\rho(x,r)$ and $\theta > 0$ we will usually write $r(B)$ to denote the radius $r$ and $\theta B$ to denote $B_\rho(x, \theta r)$.

Condition \eqref{doublingMu} is known as the {\it doubling property}
of $\mu.$
We will also  assume that  $\mu$  satisfies the
{\it reverse doubling property}, that is, for all $\eta>1$ there are constants  $c(\eta)>0$ and $\delta>0$ such that
\begin{equation}\label{reversedoubling}
\frac{\mu(B_\rho(x_1,r_1))}{\mu(B_\rho(x_2,r_2))}\ge c(\eta) \,\left(\frac{r_1}{r_2}\right)^\delta,
\end{equation}
whenever $B_\rho(x_2,r_2)\subset B_\rho(x_1,r_1),$ $x_1,\,x_2\in X$ and $0<r_1,\,r_2\le\eta\, \diam_\rho(X),$ where $\diam_\rho(X)=\sup\{\rho(x,y): x,y\in X\}.$
Note that $\eta$ is not needed when $\diam_\rho(X)=\infty$ and that when $\diam_\rho(X)<\infty$ the inequality  \eqref{reversedoubling} for, say, $\eta=2$ implies $\eqref{reversedoubling}$ for any $\eta>1$ with the same value of $\delta.$

For $x, y_1, \ldots, y_m \in X$ and   $\mu$-measurable functions
$f_1, \ldots, f_m$ defined on $X$, we will write $\vec{y}=(y_1,
\ldots, y_m) \in X^m,$ $ d\mu(\vec{y}) = d\mu(y_1) \ldots
d\mu(y_m),$ $ \vec{f}=(f_1, \ldots, f_m),$ $ \vec{f}(\vec{y})=
f_1(y_1)\ldots f_m(y_m),$ and
$\rho(x,\vec{y})=\rho(x,y_1)+\cdots,+\rho(x,y_m).$  With some abuse of notation we will write $\rho(\vec{x},\vec{y})=\rho(x_1,y_1)+\cdots+\rho(x_m,y_m)$ for $\vec{x},\vec{y}\in X^m$.  Given a measurable function $g$ on $X$, we denote the average of $g$ over a measurable subset $E\subset X$  by
$$\dashint_E g \ d\mu = \frac{1}{\mu(E)}\int_E g\ d\mu.$$

For $\alpha>0$
we define the multilinear fractional integral  operator of order
$\alpha$ as
\begin{equation}\label{potopdefinition}
\mathcal{I}_{X,\alpha} (\vec{f})(x) = \int_{X^m}
\vec{f}(\vec{y})\frac{(\rho(x,\vec{y}))^\alpha}{(\mu(B_\rho(x,
\rho(x,\vec{y}))))^m}\,d\mu(\vec{y}).
\end{equation}
More generally, we define multilinear potential operators associated to a nonnegative kernel $K(x,\vy)$ as
\begin{equation} \label{generalpot}\mathcal{T}(\vf)(x) = \int_{X^m} \vec{f}(\vy) K(x,\vy) \ d\mu(\vec{y}).\end{equation}
We will always assume that the kernel $K$ is the restriction of a nonnegative continuous kernel $\tilde{K}(\vec{x},\vec{y})$ (i.e. $K(x,\vec{y})=\tilde{K}((x,\ldots,x),\vy)$ for $(x,\vy)\in X^{m+1}$) that satisfies the following growth conditions: for every $c>1$ there exists $C>1$ such that
\begin{eqnarray}\label{growth}
\tilde{K}(\vec{x},\vy)&\leq& C \tilde{K}(\vec{z},\vy) \quad {\rm if} \ \rho(\vec{z},\vec{y}) \leq c \rho(\vec{x},\vec{y}), \text{ and} \nonumber \\ \\
\tilde{K}(\vec{x},\vy)&\leq& C \tilde{K}(\vy,\vec{z}) \quad {\rm if} \ \rho(\vec{y},\vec{z}) \leq c \rho(\vec{x},\vec{y}). \nonumber
\end{eqnarray}
The reverse doubling property implies that if the growth condition \eqref{growth} is true for some $c>1,$ then it also holds for all $c>1$ with a possibly different value of $C.$

We notice that the kernel
\begin{equation}\label{potkernel}K_\alpha(x,\vy)=\frac{\rho(x,\vy)^\al}{\mu((B_\rho(x,\rho(x,\vy)))^m}\end{equation}
associated to the operator \eqref{potopdefinition} is the restriction of
\begin{equation}\label{kerneltildam}
\tilde{K}_\alpha(\vec{x},\vy)= \frac{\rho(\vec{x},\vy)^\al}{\mu(B_\rho(x_1,\rho(\vec{x},\vy)))\cdots \mu(B_\rho(x_m,\rho(\vec{x},\vy)))}.
\end{equation}

Following \cite{SW92} we define the functional $\vp$ associated to $K$ which acts on balls by
$$\vp(B)=\sup\{ K(x,\vy) : (x,\vy) \in B^{m+1}, \rho(x,\vy)\geq c\,r(B)\}$$
for a sufficiently small positive constant $c$ and for $B$ such that $r(B)\le \eta \,\diam_\rho(X),$ for some fixed $\eta>1.$
We note that the reverse doubling property \eqref{reversedoubling}  ensures that  the set $\{ K(x,\vy) : (x,\vy) \in B^{m+1}, \rho(x,\vy)\ge cr(B)\}$ is non-empty if $c$ is sufficiently small (any $c$ satisfying $0<c^\delta<c(\eta)$ will work).

Under the assumptions \eqref{growth} on $K$, we have the following properties of $\vp$.
\begin{enumerate}[(P1)]
\item\label{prop1} If  $\theta\ge 1$ and $B$ is a $\rho$-ball in $X$ with $\theta r(B)\le \eta\,\diam_\rho(X)$, and $(x,\vy)\in (\theta B)^{m+1}$ then $\vp(B)\leq C_\theta K(x,y)$ and therefore
\begin{equation}\label{doubling}\vp(B) \leq C\vp(\theta B).\end{equation}
\item\label{prop2}  If $B'\subset B$ are $\rho$-balls in $X$ with $r(B'),\,r(B)\le \eta\,\diam_\rho(X)$, then
\begin{equation} \label{revdoubling} \vp(B)\leq C\vp(B').\end{equation}
\end{enumerate}

Note that (P1) implies that $\vp(B)<\infty$. Moreover, \eqref{doubling} and \eqref{revdoubling} assure that $\varphi$ is
 well-defined in the sense that if $B_\rho(x_1,r_1)=B_\rho(x_2,r_2),$ $0<r_1,\,r_2\le \eta\,\diam_\rho(X),$ then $\varphi(B_\rho(x_1,r_1))\approx \varphi(B_\rho(x_2,r_2)).$
 We provide a short proof of property (P1) above as the proof of (P2) is similar.
Suppose $(x,\vy)\in (\theta B)^{m+1}$ and $(s,\vec{t}) \in B^{m+1}$ with $\rho(s,\vec{t})\geq cr(B)$.  If $\rho(s,\vy) \geq \rho(\vec{t},\vy)$, then
$$\rho(x,\vy)\leq 2 m \kappa \theta r(B) \leq 4 m \kappa^2\theta c^{-1} \rho(s,\vy)$$
so that $K(s,\vy) \leq CK(x,\vy)$.  Further,
$$\rho(s,\vy)\leq 2 m \kappa \theta r(B) \leq 2 m \kappa \theta c^{-1} \rho(s,\vec{t})$$
which implies $K(s,\vec{t})\leq CK(s,\vy)$, and hence
$$K(s,\vec{t})\leq C K(x,\vy).$$
In the case when $\rho(s,\vy) \leq \rho(\vec{t},\vec{y})$, we have $\rho(x,\vy) \leq c\rho(\vec{t},\vec{y})$.  Hence,
$$\tilde{K}(\vec{t},\vec{y}) \leq C\tilde{K}((x,\ldots,x),\vy)=CK(x,\vy)$$
and
$\rho(\vec{t},\vec{y}) \leq 2 m \kappa \theta r(B) \leq c\rho(s,\vec{t})$
showing
$$K(s,\vec{t})\leq C \tilde{K}(\vec{t},\vec{y})\leq CK(x,\vy).$$
Taking the supremum over the proper $(s,\vec{t})$ we have
$$\vp(B) \leq CK(x,\vy).$$

When $K(x,\vy)=(|x-y_1|+\cdots+|x-y_m|)^{\al-nm}$, we have
$$\vp(B)\approx {r(B)^{\al-nm}} $$
and when $K$ is given by \eqref{potkernel} we have
$$\vp(B)\approx \frac{r(B)^\al}{\mu(B)^m},\quad r(B)\le\eta\, \diam_\rho(X),$$
with constants that depend only on $\kappa,$ $L,$  and $c$ as in the definition of $\vp$ (and therefore on $c(\eta)$ and $\delta$).

Finally, we will assume that the functional $\vp$ associated to our kernel $K$ satisfies the following property: there exists $\ep>0$  such that for all $C_1>1$  there exists $C_2>0$ such that
\begin{equation} \label{mainassump}
\vp(B')\mu(B')^m \leq C_2\,\left(\frac{r(B')}{r(B)}\right)^\ep \vp(B)\mu(B)^m
\end{equation}
  for  all balls  $B'\subset B,$ with  $r(B'),\,r(B)< C_1\,  \diam_\rho(X)$.
 Note that the last condition is superfluous when $\diam_\rho(X)=\infty,$ and that if $\diam_\rho(X)<\infty,$ it is enough to check \eqref{mainassump} for only, say, $C_1=2,$ and that $\epsilon$ can be taken to be independent of $C_1.$

\begin{remark}\label{mainassumpremark}
Notice that $$K(x,\vy)=(|x-y_1|+\cdots +|x-y_m|)^{\al-nm}$$ and
$$K_\alpha(x,\vy)=\frac{\rho(x,\vy)^\al}{\mu(B_\rho(x,\rho(x,\vy)))^m}$$ both satisfy \eqref{mainassump} with $\ep=\al.$  In the general case $K_\alpha,$ if the constant $C_1$ depends only on   $\kappa,$ $L,$  and the constants $c(\eta)$ and $\delta$ in \eqref{reversedoubling} with $\eta=C_1,$ so does the corresponding constant $C_2.$
\end{remark}

We now state our main results concerning weighted boundedness
properties for $\mathcal{T}$.

\begin{theor}\label{general2w} Suppose that $1<p_1, \cdots,
 p_m<\infty,$ $\frac{1}{p}=\frac{1}{p_1}+\cdots+\frac{1}{p_m}$ and $\frac{1}{m}<p\le q<\infty.$
  Let $(X,\rho,\mu)$ be a space of homogeneous type  that satisfies the reverse doubling property \eqref{reversedoubling} and let  $K$ be a kernel such that \eqref{growth} holds with $\vp$ satisfying \eqref{mainassump}.  Furthermore, let $u, v_k,$
$k=1,\cdots,m$ be weights defined on $X$ that satisfy condition
\eqref{general2wq>1hm} if $q>1$ or condition \eqref{general2wq<1hm} if $q\le 1,$
where
\begin{equation}\label{general2wq>1hm}
\sup_{B\, \rho\text{-ball}} \vp(B) \mu(B)^{\frac{1}{q} + \frac{1}{{p_1}'}+\cdots +\frac{1}{{p_m'}}}
\left( \frac{1}{\mu(B)}\int_B u^{qt} d\mu \right)^{1/qt}
\prod_{j=1}^m \left( \frac{1}{\mu(B)}\int_B v_i^{-tp_i'} d\mu
\right)^{1/tp_i'} < \infty,
\end{equation}
for some $t > 1$,

\begin{equation}\label{general2wq<1hm}
\sup_{B\, \rho\text{-ball}} \vp(B) \mu(B)^{\frac{1}{q} + \frac{1}{{p_1}'}+\cdots +\frac{1}{{p_m'}}}
\left( \frac{1}{\mu(B)}\int_B u^{q} d\mu \right)^{1/q} \prod_{j=1}^m
\left( \frac{1}{\mu(B)}\int_B v_i^{-tp_i'} d\mu \right)^{1/tp_i'} <
\infty,
\end{equation}
for some $t > 1$. Then there exists a constant $C$  such that
\[
\left(\int_{X}\left(\abs{\mathcal{T}\vec{f}}u\right)^q\,d\mu\right)^{1/q}\le
C \prod_{k=1}^m\left(\int_X(\abs{f_k}v_k)^{p_k}\,d\mu\right)^{1/p_k}
\]
for all $\vec{f}\in L^{p_1}(X,v_1^{p_1} d\mu)\times\cdots\times
L^{p_m}(X,v_m^{p_m}d\mu).$ The constant $C$ depends only on the constants appearing in \eqref{quasi}, \eqref{doublingMu}, \eqref{reversedoubling}, \eqref{growth}, \eqref{mainassump}, \eqref{general2wq>1hm} and \eqref{general2wq<1hm}.
\end{theor}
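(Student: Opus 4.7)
The plan is to follow the Sawyer--P\'erez two-weight paradigm adapted to the multilinear, homogeneous-type setting. The first step is to discretize $\mathcal{T}$ into scales: using the growth hypothesis \eqref{growth} and property (P1), on the annulus $\rho(x,\vec{y}) \approx 2^k$ the kernel is controlled by $\vp(B_k(x))$, where $B_k(x) = B_\rho(x,2^k)$. This yields the pointwise bound
\[
|\mathcal{T}\vec{f}(x)| \lesssim \sum_{k \in \mathbf{Z}} \vp(B_k(x)) \prod_{j=1}^m \int_{B_k(x)} |f_j|\,d\mu,
\]
which, after passing to a dyadic grid of Christ / Hyt\"onen--Kairema type on $(X,\rho,\mu)$, gives a dyadic majorant of the form $\sum_{Q \ni x} \vp(Q)\prod_j \int_Q |f_j|\,d\mu$ for $|\mathcal{T}\vec{f}(x)|$. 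Thanks to \eqref{mainassump}, the coefficients $\vp(Q)\mu(Q)^m$ decay geometrically as $Q$ shrinks, so the series behaves like a telescoping sum over scales.

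For the weighted estimate when $q > 1$, I would dualize against an $h \in L^{q'}$ of unit norm and then run a Calder\'on--Zygmund principal-cubes/stopping-time decomposition of the dyadic tree that respects simultaneously the averages of $hu$ and of each $|f_j|$. On each principal cube, H\"older's inequality with exponents $(q',\, qt,\, tp_1',\ldots, tp_m')$ combined with the testing condition \eqref{general2wq>1hm} handles one contribution, while the geometric decay of $\vp(Q)\mu(Q)^m$ from \eqref{mainassump} together with the Carleson packing inherent to the stopping rule sums the tower of generations and returns the product $\prod_k \|f_k v_k\|_{p_k}$.

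For $q \le 1$ linearization is unavailable, so I would argue directly: a level-set decomposition of $|\mathcal{T}\vec{f}|\,u$ combined with the dyadic majorant and the stronger form \eqref{general2wq<1hm} of the testing condition (which drops the $qt$-average of $u$) lets one estimate each level set via the maximal cubes of the stopping-time tree, using $\ell^q$-subadditivity (valid since $q\le 1$) in place of duality. The main obstacle is precisely this subcritical regime $1/m < p \le q \le 1$; the slack $t > 1$ in the testing condition, the geometric summability built into \eqref{mainassump}, and the fact that the exponent sum $\sum_j 1/p_j = 1/p$ stays below $m$ are what allow the $m$-fold H\"older step to close without losing the geometric convergence over scales.
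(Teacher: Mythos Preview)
Your outline is correct and is essentially the paper's own argument: discretize via Christ's dyadic cubes to get the majorant $\sum_{Q}\vp(B(Q))\prod_j\int_{B(Q)}|f_j|\,d\mu\,\chi_Q$, run a stopping-time selection of principal cubes, collapse the inner sums with the packing lemma coming from \eqref{mainassump}, then apply H\"older and the bump condition on each principal cube and finish with the boundedness of the $L^s$-average maximal functions (which is exactly where the slack $t>1$ is spent). Two small organizational differences: the paper stops only on the level sets of the \emph{multilinear} dyadic maximal function $\prod_j\dashint_{B(Q)}|f_j|$ (not separately on $hu$ and on each $f_j$), and for $q\le 1$ it does not look at level sets of $|\mathcal{T}\vec{f}|\,u$ but simply uses $(\sum a_Q)^q\le\sum a_Q^q$ on the dyadic majorant before repeating the same stopping-time/packing argument.
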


When $K$ is given by \eqref{potkernel} as noted before we have $\vp(B)\approx {\diam_\rho(B)^\alpha}/{\mu(B)^m}$,  hence we have the following result for $\mathcal{I}_{X,\al}$

\begin{corollary}\label{potopbound2w} Suppose that $1<p_1, \cdots,
 p_m<\infty,$ $\frac{1}{p}=\frac{1}{p_1}+\cdots+\frac{1}{p_m}$ and $\frac{1}{m}<p\le q<\infty.$
Let $(X,\rho,\mu)$ be a space of homogeneous type that satisfies the reverse doubling property \eqref{reversedoubling} and assume that the kernel $\tilde{K}_\alpha$ in \eqref{kerneltildam} satisfies the growth conditions \eqref{growth}. Let $u, v_k,$ $k=1,\cdots,m$ be weights defined on $X$ that satisfy condition
\eqref{2wq>1hm} if $q>1$ or condition \eqref{2wq<1hm} if $q\le 1,$
where
\begin{equation}\label{2wq>1hm}
\sup_{B\, \rho\text{-ball}} \diam_\rho(B)^\alpha \mu(B)^{1/q - 1/p}
\left( \frac{1}{\mu(B)}\int_B u^{qt} d\mu \right)^{1/qt}
\prod_{j=1}^m \left( \frac{1}{\mu(B)}\int_B v_i^{-tp_i'} d\mu
\right)^{1/tp_i'} < \infty,
\end{equation}
for some $t > 1$,

\begin{equation}\label{2wq<1hm}
\sup_{B\, \rho\text{-ball}} \diam_\rho(B)^\alpha \mu(B)^{1/q - 1/p}
\left( \frac{1}{\mu(B)}\int_B u^{q} d\mu \right)^{1/q} \prod_{j=1}^m
\left( \frac{1}{\mu(B)}\int_B v_i^{-tp_i'} d\mu \right)^{1/tp_i'} <
\infty,
\end{equation}
for some $t > 1$. Then there exists a constant $C$  such that
\[
\left(\int_{X}\left(\abs{\mathcal{I}_{X,\alpha}\vec{f}}u\right)^q\,d\mu\right)^{1/q}\le
C \prod_{k=1}^m\left(\int_X(\abs{f_k}v_k)^{p_k}\,d\mu\right)^{1/p_k}
\]
for all $\vec{f}\in L^{p_1}(X,v_1^{p_1}d\mu)\times\cdots\times
L^{p_m}(X,v_m^{p_m}d\mu).$ The constant $C$ depends only on the constants appearing in \eqref{quasi}, \eqref{doublingMu}, \eqref{reversedoubling}, \eqref{growth}, \eqref{mainassump}, \eqref{2wq>1hm} and \eqref{2wq<1hm}
\end{corollary}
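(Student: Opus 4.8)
The plan is to derive Corollary \ref{potopbound2w} as an immediate specialization of Theorem \ref{general2w} to the operator $\mathcal{T} = \mathcal{I}_{X,\alpha}$, that is, to the kernel $K_\alpha$ of \eqref{potkernel}. The entire argument then amounts to checking two things: that the structural hypotheses of Theorem \ref{general2w} are met by $K_\alpha$, and that the abstract weight conditions \eqref{general2wq>1hm} and \eqref{general2wq<1hm} reduce, for this particular kernel, to the conditions \eqref{2wq>1hm} and \eqref{2wq<1hm} appearing in the statement.

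For the first point I would invoke the facts about $K_\alpha$ already established in \S~\ref{potop}: $K_\alpha$ is the restriction of the continuous kernel $\tilde{K}_\alpha$ of \eqref{kerneltildam}, whose growth conditions \eqref{growth} are part of the hypotheses of the corollary; the associated functional satisfies $\varphi(B) \approx \diam_\rho(B)^\alpha / \mu(B)^m$ for all $\rho$-balls $B$ with $r(B) \le \eta\,\diam_\rho(X)$, with constants depending only on $\kappa$, $L$ and the constant $c$ in the definition of $\varphi$; and, by Remark \ref{mainassumpremark}, $\varphi$ satisfies \eqref{mainassump} with $\ep = \alpha$. Hence all the hypotheses of Theorem \ref{general2w} on $(X,\rho,\mu)$ and on the kernel hold, the reverse doubling property \eqref{reversedoubling} being carried over directly from the hypotheses of the corollary.

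For the second point I would perform the bookkeeping on the power of $\mu(B)$. Since $\frac{1}{p_i'} = 1 - \frac{1}{p_i}$, one has $\sum_{i=1}^m \frac{1}{p_i'} = m - \frac{1}{p}$, so that
\[
\varphi(B)\,\mu(B)^{\frac{1}{q} + \frac{1}{p_1'} + \cdots + \frac{1}{p_m'}} \approx \frac{\diam_\rho(B)^\alpha}{\mu(B)^m}\,\mu(B)^{\frac{1}{q} + m - \frac{1}{p}} = \diam_\rho(B)^\alpha\,\mu(B)^{\frac{1}{q} - \frac{1}{p}},
\]
with implicit constants independent of $B$. Taking suprema over $\rho$-balls, \eqref{general2wq>1hm} with $K = K_\alpha$ is therefore equivalent to \eqref{2wq>1hm}, and \eqref{general2wq<1hm} is equivalent to \eqref{2wq<1hm}. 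Theorem \ref{general2w} then applies and gives the desired estimate for $\mathcal{I}_{X,\alpha}$. I do not anticipate any real obstacle here; the only point requiring attention is to track the dependence of the constants — in particular that the equivalence $\varphi(B)\approx \diam_\rho(B)^\alpha/\mu(B)^m$ and the constant in \eqref{mainassump} depend only on $\kappa$, $L$, $c(\eta)$ and $\delta$ — so that the final constant depends only on the quantities enumerated in the statement.
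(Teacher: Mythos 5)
Your proposal is correct and is exactly the route the paper takes: the paper states the corollary immediately after Theorem \ref{general2w} with the single observation that for $K = K_\alpha$ one has $\vp(B) \approx \diam_\rho(B)^\alpha/\mu(B)^m$, which together with Remark \ref{mainassumpremark} (giving \eqref{mainassump} with $\ep=\alpha$) and the identity $\frac{1}{q}+\sum_i\frac{1}{p_i'}-m = \frac{1}{q}-\frac{1}{p}$ turns \eqref{general2wq>1hm} and \eqref{general2wq<1hm} into \eqref{2wq>1hm} and \eqref{2wq<1hm}. Your write-up just makes the bookkeeping explicit; no gap.
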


\begin{remark}\label{rmk:moreweights}
Moen~\cite{Mthesis, M09} proved Corollary~\ref{potopbound2w} in
the context  of $X=\rn$ with the Euclidean  metric and Lebesgue measure. The multilinear fractional integral operator
$\mathcal{I}_{X,\alpha}$ reduces to the Riesz potential of order
$\alpha$ in $\rn$ when $m=1,$ $X=\rn$ and $\mu$ is Lebesgue
measure. Namely,
\[
I_\alpha f(x)=\int_{\rn}\frac{f(y)}{\abs{x-y}^{n-\alpha}}\,dy,
\qquad 0<\alpha<n.
\]
Muckenhoupt and Wheeden~\cite{MW} characterized the one-weight
strong type inequality
\[
\left(\int_{\rn} (I_\alpha f w)^q\,dx\right)^{1/q}\le C\,
\left(\int_{\rn} (f w)^p\,dx\right)^{1/p},
\]
for $f\ge 0,$ $1<p<\frac{n}{\alpha}$ and $q$ such that
$\frac{1}{q}=\frac{1}{p}-\frac{\alpha}{n}.$ They proved that the
above inequality holds if and only if $w$ belongs to the class
$A_{p,q},$ this is
\[
\sup_{Q}\left(\frac{1}{\abs{Q}}\int_Q w^q\,dx\right)^{1/q}
\left(\frac{1}{\abs{Q}}\int_{Q}w^{-p'}\,dx\right)^{1/p'}<\infty,
\]
where the supremum is taken over all cubes $Q$ in $\rn$ with sides
parallel to the coordinate axes. The two-weight strong type
inequality for $I_\alpha$
\[
\left(\int_{\rn} (I_\alpha f w)^q\,dx\right)^{1/q}\le C\,
\left(\int_{\rn} (f v)^p\,dx\right)^{1/p}, \qquad f\ge0,
\]
was also extensively studied. For
example, Sawyer~\cite{S88} gave a characterization for $w$ and $v$
that basically come to testing the above inequality with $f=\chi_Q
v^{(1-p')p}$ and its dual inequality with $\chi_Q w^q.$
Sawyer-Wheeden~\cite{SW92} and P\'erez-Wheeden~\cite{PW01} studied
two-weight conditions for weighted inequalities of
fractional integral operators on spaces of homogeneous type.
In particular, \eqref{2wq>1hm} ($q>1$)  reduces to the conditions imposed in  P\'erez-Wheeden~\cite[Theorem 2.2]{PW01}
to prove weighted boundedness properties  for $\mathcal{I}_{X,1}$ when $m=1.$

We stress that, even in the Euclidean setting and with the choice $u = \prod_{k=1}^m v_i$, using iterations of the linear results mentioned above to prove multilinear ones would lead to considering weights $(v_1, \ldots, v_m)$ in the class
$$
W(p,q) :=\bigcup_{q_1, \ldots, q_m} \prod_{i=1}^m A_{p_i,q_i},
$$
where the union is over all $q_i \geq p_i$ that satisfy $1/q = 1/q_1 + \cdots + 1/q_m$, $1/p_i - 1/q_i = 1/n$, $i=1, \ldots, m$. However, the class of weights $u$, $v_1$,$\ldots$, $v_m$ (with $u = \prod_{k=1}^m v_i$) satisfying \eqref{2wq>1hm} is strictly larger than $W(p,q)$. See Remark 7.5 in \cite{M09} and Section 7 in \cite{LOPTT}. See also Pradolini \cite{P10} for related results on weighted inequalities for
the multilinear fractional integral operator on $\rn$.

\end{remark}

\section{Proof of Theorem~\ref{general2w}}\label{secc:proofialpha}

We recall the following construction due to M. Christ \cite{Ch90} of dyadic cubes in a general space of homogeneous type $(X,\rho,\mu)$ with constant
$\kappa\ge 1$ in the quasi-triangle inequality for $\rho$. There exists a collection of open subsets
$\mathcal{D}= \{Q_\alpha^k : k \in \Z, \alpha \in I_k\}$ (here, for each
$k \in \Z$, $I_k$ is a countable set of indices), and constants $A >
2\kappa$, $a_0, a_1 > 0$, depending only on $\kappa,$ such that
\begin{enumerate}[(i)]
\item $\mu\left(X \setminus \bigcup_\alpha Q_\alpha^k \right)=0$
for every $k \in \Z$,

\item\label{nested} given $Q_\beta^l$ and $Q_\alpha^k$ with $l \leq k$,
then either $Q_\beta^l \subset Q_\alpha^k$ or
$Q_\beta^l \cap Q_\alpha^k = \emptyset$,

\item\label{uniqueparent} for each $(k,\alpha)$ and each $l > k$ there is
a unique
$\beta$ such that $Q_\alpha^k \subset Q_\beta^l$,

\item\label{diamlak} $\diam_\rho(Q_\alpha^k) \leq a_1 A^k$,

\item\label{ballinside} each $Q_\alpha^k$ contains some ball
$B_\rho(x_\alpha^k,a_0
A^k)$,

\end{enumerate}

We set $\mathcal{D}^k=\{Q_\al^l \in \mathcal{D}: l=k\}$ and note that by
property \eqref{nested} the family $\mathcal{D}^k$ may be assumed to be
disjoint.    If $Q=Q_{\alpha}^k$ we call
$x_Q=x_\alpha^k$ as given in property \eqref{ballinside} the center of $Q$
and define $B(Q)=B_{\rho}(x_Q,2\kappa a_1 A^{k})$ where $a_1$ is as given
in property \eqref{diamlak}. Note that if $\diam_\rho(X)=\infty$ then $Q\neq X$ for all $Q\in \mathcal{D},$ and if $\diam_\rho(X)<\infty,$ there exists $k_0\in\Z$
such that $\mathcal{D}^k=\{X\}$ for all $k\ge k_0$ and $X$ is not in $\mathcal{D}^k$ for $k\le k_0,$ in which case we only consider $k\le k_0.$

Observe that if $Q\in \mathcal{D}^k,$ $Q'\in \mathcal{D}^{k'}$ and
$Q\subset Q'$ then
$B(Q)\subset B(Q').$ To see this, note that
by property \eqref{nested}, we have $k\le k'.$ Then if $y\in B(Q)$
\[
\rho(y,x_{Q'})\le \kappa \,(\rho(y,x_Q)+\rho(x_Q,x_{Q'}))\le
\kappa\,(2\kappa a_1A^{k}+a_1 A^{k'})\le 2\kappa\,a_1 A^{k'},
\]
where we have used property \eqref{diamlak} for the cube $Q'$ and that
$A>2\kappa.$

Notice that by property \eqref{uniqueparent}  for every $Q\in \cd^k$ there
is a unique cube $Q^*\in \cd^{k+1},$ called the parent of $Q,$ such that
$Q\subset Q^*.$ Moreover $\mu(Q)\sim\mu(B(Q))\sim \mu(B(Q^*))$   since
$\mu$ is doubling and
\[B_\rho(x_Q,a_0 A^k)\subset Q\subset B(Q)\subset B(Q^*)\subset
(\kappa+1/2)A\,B(Q) .\]
It is important to observe that if $Q\in \mathcal{D}^k$ and $Q\neq X$ there exists $l>k$ such that if $Q^{*l}\in \mathcal{D}^l$ is the cube containing $Q$ given by property \eqref{uniqueparent} (the $l$th ancestor of $Q$) then $Q\subsetneq Q^{*l}.$  This is clear when $\diam_\rho(X)<\infty.$ When $\diam_\rho(X)=\infty,$ if $Q=Q^{*l}$ for all $l>k$ then, by property \eqref{ballinside}, $Q$ contains balls of radius $a_0 A^l$ for all $l>k.$
However, the radius and diameter of a ball are comparable (a consequence of the reverse doubling property \eqref{reversedoubling}), obtaining $A^l\le C\, \diam_\rho(Q)<\infty$ for all $l>k,$ a contradiction.

Our first step towards the proof of Theorem~\ref{general2w} is a
discretization of $\mathcal{T}$.  Let $(x,\vy)\in (\bigcap_k
\bigcup_\alpha Q_{\alpha}^k)^{m+1}$ and $l\in \Z$ be such that
$$A^{l-1}\leq \rho(x,\vy)\leq A^{l}.$$
There is a dyadic cube $Q\in \cd^l$ with $x\in Q$.  Let $x_Q$ be the
center of $B(Q)$, and $y_1,\ldots,y_m$ be the coordinates of $\vy$.
Since $\diam(Q)\le a_1 A^l$ (and we can assume that $a_1$ is larger than
1),
$$\rho(x_Q,y_i)\leq \kappa(\rho(x,x_Q)+\rho(x,y_i))\leq \kappa
(a_1+1)A^{l}\leq 2\kappa a_1 A^l$$
for $1\leq i\leq m$ and consequently $\vy\in B(Q)^m$.  Furthermore, since
$(x,\vy)\in B(Q)^{m+1}$ and $\rho(x,\vy)\geq A^{l-1}= r(B(Q))/2\kappa a_1 A$ we have
$$K(x,\vy)\leq \vp(B(Q))$$
by the definition of $\vp$ (note that $r(B(Q))=2\kappa a_1 A^l\le 2\kappa a_1 A\rho(x,\vec{y})\le  2 \kappa a_1A m\, \diam_\rho(X)$, so we can choose a structural constant $\eta\ge 2\kappa a_1 A m$ in the definition of $\vp$).  Since $x\in Q$ and $\vy \in B(Q)^m$ it follows that
$$K(x,\vy)\leq  \vp(B(Q))\chi_Q(x) \chi_{B(Q)^m}(\vy) \leq  \sum_{Q\in
\cd} \vp(B(Q))\chi_Q(x) \chi_{B(Q)^m}(\vy)$$
where the last inequality holds for almost all $(x,\vy)\in X^{m+1}$.  Multiplying by $\vf(\vy)\ge 0$
and integrating yields
\begin{equation} \label{discretize}\mathcal{T}(\vf)(x)\leq \sum_{Q\in \cd}
\vp(B(Q))\int_{B(Q)^m} \vf(\vy)\  d\mu(\vy) \chi_Q(x).\end{equation}
Multiplying by $u(x)g(x)\geq0 $ and integrating
$$\int_X \mathcal{T}(\vf)(x) g(x) u(x) \ d\mu(x) \leq \sum_{Q\in \cd}
\vp(B(Q))\int_Q g(x)u(x) \ d\mu(x)  \int_{B(Q)^m} \vf(\vy) \ d\mu(\vy).$$

Now we switch the summation to a smaller set of dyadic cubes with better
disjointness properties.  To define this smaller set of dyadic cubes we look
at level sets corresponding to a certain multilinear maximal function.
Set
$$\cm_{B(\cd)}(\vec{h})(x)=\sup_{Q\in \cd:x\in
Q}\frac{1}{\mu(B(Q))^m}\int_{B(Q)^m} |\vec{h}(\vy)| \dy,\qquad x\in
\bigcup_{Q\in \cd} Q.$$
Let $a>1$ be a number to be chosen later, and set
$$\cs^k=\{x\in \cup_{Q\in\cd} Q: \cm_{B(\cd)}(\vec{f})(x)>a^k\}.$$
If $x \in \mathcal{S}^k$, then there exists $Q \in \cd$ such that $x \in
Q$ and
\begin{equation}\label{m>ak}
\frac{1}{\mu(B(Q))^m}\int_{B(Q)^m} \vf (\vy) \dy > a^k.
\end{equation}
In particular, we have $Q \subset \mathcal{S}^k$ and the fact that
$\int_{X^m} \fty \dy < \infty$ and the nested nature of the dyadic cubes
in $\cd$ allow to write
$$
\mathcal{S}^k = \bigcup_j Q_{k,j},
$$
where the cubes $Q_{k,j}$ belong to $\cd$, and they are disjoint and
maximal relative to inclusion and generation  with respect to the property \eqref{m>ak} (the existence of these maximal
cubes is guarantied by the reverse doubling property \eqref{reversedoubling} when $\diam_\rho(X)=\infty$).  Notice that if
$Q_{k,j}^*$ is the parent of  $Q_{k,j}$ and $Q_{k,j}\neq X,$  by the maximality of
$Q_{k,j}$  we have
\bey
a^k&<&\frac{1}{\mu(B(Q_{k,j}))^m}\int_{B(Q_{k,j})^m} \vf (\vy) \dy \\
&\leq & \frac{c}{\mu(B(Q_{k,j}^*))^m}\int_{B(Q_{k,j}^*)^m} \vf (\vy) \dy
\\
&\leq & c a^k \leq a^{k+1}
\eey
if  $a$ is chosen large enough.

The next step is to estimate $\mu(Q_{k,j} \cap \mathcal{S}^{k+1})$.
Consider $x \in Q_{k,j} \cap \mathcal{S}^{k+1},$ then
\begin{equation}
\cm_{B(\cd)}(\vec{f})(x)=\mathop{\sup_{P \in \mathcal{D}}}_{x \in P}
\frac{1}{\mu(B(P))^m}\int_{B(P)^m} \vf( \vy) \dy > a^{k+1}
\end{equation}
and the nested property of dyadic cubes together with the maximality  of
$Q_{k,j}$  with respect to the inequality \eqref{m>ak} imply that if $P \in
\cd$ is such that $x\in P$ and
$$\frac{1}{\mu(B(P))^m}\int_{B(P)^m} \vf(
\vy) \dy > a^{k+1},$$
then  $P \subset Q_{k,j}.$ Therefore, we have
\begin{align*}
a^{k+1}& < \cm_{B(\cd)}(\vec{f})(x)=\mathop{\sup_{P \in \mathcal{D}}}_{x
\in P \subset Q_{k,j}} \frac{1}{\mu(B(P))^m}\int_{B(P)^m} \vf( \vy) \dy\\
& \le \mathop{\sup_{P \in \mathcal{D}}}_{x \in P }
\frac{1}{\mu(B(P))^m}\int_{B(P)^m} (f_1 \chi_{B(Q_{k,j})},\ldots, f_m
\chi_{B(Q_{k,j})}) ( \vy) \dy,
\end{align*}
where we have used that $B(P)\subset B(Q_{k,j})$ for $P\subset Q_{k,j}.$
Consequently,
\begin{align*}
\mu(Q_{k,j} \cap \mathcal{S}^{k+1}) & = \mu (\{x \in Q_{k,j} :
\mathcal{M}_{B(\mathcal{D})}(\vec{f})(x) > a^{k+1} \} )\\
& \leq \mu (\{x \in Q_{k,j} : \mathcal{M}_\mu(f_1
\chi_{B(Q_{k,j})},\ldots, f_m \chi_{B(Q_{k,j})})(x) > a^{k+1} \} )\\
& \leq \left(\frac{\|\mathcal{M}_\mu\|}{a^{k+1}}  \int_{B(Q_{k,j})^m}
\vf(\vy) \, \dy \right)^{1/m}\\
& = \mu(B(Q_{k,j}))
\left(\frac{\|\mathcal{M}_\mu\|}{a^{k+1}\mu(B(Q_{k,j}))^m}
\int_{B(Q_{k,j})^m}  \vf(\vy) \, \dy \right)^{1/m}\\
& \leq \mu(B(Q_{k,j})) \left(\frac{c\|\mathcal{M}_\mu\|}{a} \right)^{1/m} =: \theta \mu(B(Q_{k,j}))
\leq \theta\, \mu(Q_{k,j}),
\end{align*}
where $\cm_\mu$ is the multi-sublinear maximal operator
$$\cm_\mu(\vec{h})(x) =\mathop{\sup_{x\in B}}_{B\,\rho \text{-ball}} \prod_{i=1}^m \frac{1}{\mu(B)} \int_B
|h_i(y_i)| \ d\mu(y_i)$$
and $\|\cm_\mu\|$ is the smallest constant in the weak inequality
$$\mu\{ x\in X: \cm_\mu(\vf)(x)>\lambda\}^m\leq
\frac{\|\cm_\mu\|}{\lambda}\prod_{i=1}^m \|f_i\|_{L^1(\mu)}.$$
Notice that such a constant $\|\cm_\mu\|$ exists because
$$\cm_\mu(\vf) \leq \prod_{i=1}^m M_\mu f_i$$
where $M_\mu$ is the Hardy-Littlewood maximal operator associated to the space of homogeneous type $(X,\rho,\mu)$. The constant $\theta$ can be made smaller than one by choosing $a$
sufficiently large. In particular, if we set $E_{k,j}= Q_{k,j}\setminus
\mathcal{S}^{k+1}$, we get
\begin{equation}
\mu(E_{k,j}) \geq \gamma \mu(Q_{k,j}), \quad Q_{k,j}\neq X,
\end{equation}
for some constant $\gamma \in (0,1)$ that depends only on structural constants.

Note that if $\diam_\rho(X)<\infty$ then there exists $k_1\in\Z$ such that $a^{k_1}<\frac{1}{\mu(X)}\int_X f(y)\,dy\le a^{k_1+1}.$ If $\diam_\rho(X)=\infty,$
set $k_1=-\infty.$ Next, for $k > k_1,$ $k\in \Z,$ define
\begin{eqnarray*}
\mathcal{C}^k &=& \{Q \in \mathcal{D}: a^k <
\frac{1}{\mu(B(Q))^m}\int_{B(Q)^m} \vf ( \vy) \dy \leq a^{k+1}\},
\end{eqnarray*}
and
\begin{equation*}
\mathcal{C}^{k_1}=
\begin{cases} \{Q \in \mathcal{D}:
\frac{1}{\mu(B(Q))^m}\int_{B(Q)^m} \vf ( \vy) \dy \leq a^{k_1+1}\}, & k_1\neq-\infty; \\
\varnothing, \quad & k_1=-\infty.
\end{cases}
\end{equation*}

If $k>k_1,$ we have $Q_{k,j}\in \mathcal{C}^k$ for all $j$
and if $Q\in \mathcal{C}^k,$ $k>k_1,$ then $Q$ must be contained in $Q_{k,j}$ for some
$j$.  Returning to the estimate for $\int_X (\mathcal {T}\vf) g u\ d\mu $,
we have
\begin{align*}
& \int_X (\mathcal{T}\vf) gu\ d\mu\\
&\leq \sum_{Q\in \cd}
\vp(B(Q))\int_{B(Q)^m}\fty \dy \int_Q g(x)u(x) \ d\mu(x)\\
&=\sum_{k\ge k_1} \sum_{Q\in \mathcal{C}^k} \frac{1}{\mu(B(Q))^m}\int_{B(Q)^m}\fty
\dy \ \vp(B(Q))\mu(B(Q))^m\int_Q gu \ d\mu  \\
&\leq  \sum_{k>k_1} a^{k+1} \sum_j \sum_{\stackrel{Q\in \mathcal{C}^k}{Q\subset
Q_{k,j}}}  \vp(B(Q))\mu(B(Q))^m\int_Q gu \ d\mu \\
& \qqq \qqq + \sum_{Q\in \mathcal{C}^{k_1}} a^{k_1+1} \ \vp(B(Q))\mu(B(Q))^m\int_Q gu \ d\mu.
\end{align*}
We need the following lemma.
\begin{lemma}\label{packing} If $\vp$ satisfies \eqref{mainassump} with constants  $\epsilon,$ $C_1$ and $C_2$ then
there exists a constant $C=C(C_1, C_2,\epsilon,A)$ such that for each $Q_0\in \cd$ with $r(B(Q_0))\le C_1\, \diam_\rho(X)$
$$\sum_{\stackrel{Q\in \cd}{Q\subset Q_0}}  \vp(B(Q))\mu(B(Q))^m\int_Q gu
\ d\mu \leq C\vp(B(Q_0))\mu(B(Q_0))^m\int_{Q_0} gu \ d\mu.
$$
\end{lemma}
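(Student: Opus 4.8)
The plan is to sort the cubes $Q \subset Q_0$ according to how far their "size" (as measured by $\vp(B(Q))\mu(B(Q))^m$, or equivalently by their generation) has dropped relative to $Q_0$, and then sum a geometric series. Concretely, for an integer $j \geq 0$ let $\mathcal{G}_j$ denote the collection of $Q \in \cd$ with $Q \subset Q_0$ whose generation is exactly $j$ steps below that of $Q_0$; equivalently, writing $Q_0 \in \cd^{k_0}$, we have $\mathcal{G}_j = \{Q \in \cd : Q \subset Q_0,\ Q \in \cd^{k_0 - j}\}$. By property (ii) of Christ's cubes the members of $\mathcal{G}_j$ are pairwise disjoint and $\bigcup_{Q \in \mathcal{G}_j} Q \subset Q_0$ (up to a $\mu$-null set, by (i)), so $\sum_{Q \in \mathcal{G}_j} \int_Q gu\, d\mu \leq \int_{Q_0} gu\, d\mu$ since $g, u \geq 0$.

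The key point is that $\vp(B(Q))\mu(B(Q))^m$ decays geometrically in $j$, uniformly in $Q \in \mathcal{G}_j$. Indeed, if $Q \in \mathcal{G}_j$ then $Q \subset Q_0$, hence $B(Q) \subset B(Q_0)$ (as observed in the paragraph preceding the lemma), and $r(B(Q)) = 2\kappa a_1 A^{k_0-j} = A^{-j} r(B(Q_0))$; also $r(B(Q)), r(B(Q_0)) \le C_1 \diam_\rho(X)$ by hypothesis on $Q_0$ and the fact that radii only shrink. Therefore \eqref{mainassump} applies and gives
\[
\vp(B(Q))\mu(B(Q))^m \le C_2 A^{-j\epsilon}\, \vp(B(Q_0))\mu(B(Q_0))^m .
\]
Combining the last two displays,
\[
\sum_{\stackrel{Q\in \cd}{Q\subset Q_0}} \vp(B(Q))\mu(B(Q))^m\int_Q gu\, d\mu
= \sum_{j \ge 0} \sum_{Q \in \mathcal{G}_j} \vp(B(Q))\mu(B(Q))^m \int_Q gu\, d\mu
\le C_2\Big(\sum_{j \ge 0} A^{-j\epsilon}\Big) \vp(B(Q_0))\mu(B(Q_0))^m \int_{Q_0} gu\, d\mu,
\]
and since $A > 2\kappa \ge 2 > 1$ and $\epsilon > 0$ the geometric series converges, yielding the claimed constant $C = C_2/(1 - A^{-\epsilon})$, which depends only on $C_1, C_2, \epsilon, A$.

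The only genuinely delicate point is making sure that \eqref{mainassump} can be invoked for every $Q \in \mathcal{G}_j$: this needs $B(Q) \subset B(Q_0)$ (already established in the excerpt for nested dyadic cubes) and the size restriction $r(B(Q)), r(B(Q_0)) < C_1 \diam_\rho(X)$, which is where the hypothesis $r(B(Q_0)) \le C_1 \diam_\rho(X)$ enters — strictly speaking one should take $C_1$ slightly larger, or note as the authors do that when $\diam_\rho(X) = \infty$ the restriction is vacuous and when $\diam_\rho(X) < \infty$ it suffices to check \eqref{mainassump} at a single scale. A secondary bookkeeping issue, when $\diam_\rho(X) < \infty$, is that the generations below $Q_0$ eventually stabilize ($Q = X$ cannot occur for $Q \subsetneq Q_0$, but some $\mathcal{G}_j$ may be empty or a single cube); this only helps, since empty levels contribute nothing. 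No other obstacle is expected — the argument is essentially a one-line geometric summation once the decay estimate is in place.
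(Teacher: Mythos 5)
Your proof is correct and is essentially the paper's own argument: you decompose the sum by generation level below $Q_0$, apply \eqref{mainassump} with the nesting $B(Q)\subset B(Q_0)$ and ratio $r(B(Q))/r(B(Q_0))=A^{-j}$ to obtain geometric decay $C_2 A^{-j\epsilon}$, use disjointness of same-generation cubes to bound $\sum_{Q\in\mathcal{G}_j}\int_Q gu\,d\mu\le\int_{Q_0}gu\,d\mu$, and sum the geometric series. Your closing remarks on the size restriction and finite-diameter bookkeeping are sensible observations but add nothing beyond what the paper already addresses implicitly.
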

\begin{proof}
Note that if $Q\in\mathcal{D}$ and $Q\subset Q_0,$ then $r(B(Q))\le r(B(Q_0))\le C_1\,\diam_\rho(X)$ and recall that
 $B(Q)\subset B(Q_0).$ We now use the condition \eqref{mainassump} on $\vp$ to get
\begin{align*}
& \sum_{\stackrel{Q\in \cd}{Q\subset Q_0}} \vp(B(Q))\mu(B(Q))^m\int_Q gu \, d\mu\\
&= \sum_{l=0}^\infty \sum_{\stackrel{Q\subset Q_0}{\ell(Q)=A^{-l}\ell(Q_0)}}
\vp(B(Q))\mu(B(Q))^m\int_Q gu \, d\mu \\
&\leq  C_2\,\vp(B(Q_0))\mu(B(Q_0))^m \sum_{l=0}^\infty A^{-l\ep}
\sum_{\stackrel{Q\subset Q_0}{\ell(Q)=A^{-l}\ell(Q_0)}}\int_Q gu \, d\mu\\
&\leq C_2\,\left(\sum_{l=0}^\infty A^{-l\ep}\right) \vp(B(Q_0))\mu(B(Q_0))^m
\int_{Q_0} gu \, d\mu.
\end{align*}
\end{proof}

 Lemma \ref{packing} with $C_1=\eta,$ where $\eta=2\kappa a_1 A m$ is the structural constant chosen above, yields
\bey \int_X (\mathcal{T}\vf) g u\ d\mu &\leq&  C\sum_{k,j,\, k>k_1}
\vp(B(Q_{k,j}))\mu(B(Q_{k,j}))^m \prod_{i=1}^m
\dashint_{B(Q_{k,j})}f_i(y_i) \ d\mu(y_i)  \\
&&\qquad  \quad \times \dashint_{Q_{k,j}} gu \ d\mu \ \ \mu(Q_{k,j}) + C_{k_1},
\eey
where $C_{k_1}=0$ if $k_1=-\infty$ and  \[C_{k_1}=\vp(X)\mu(X)^m \prod_{i=1}^m
\dashint_{X}f_i(y_i) d\mu   \dashint_{X} gu \ d\mu\ \mu(X)\] if $k_1\neq-\infty.$
We have thus fully discretized $\mathcal{T}$ and are ready to put
everything together to get the estimates for the case $q\ge 1$. If ${k_1}\neq-\infty,$  $C_{k_1}$ can be handled in the same way as the terms in the sum on $k$ and $j,$ so we will assume that $k_1=-\infty$ and therefore $C_{k_1}=0.$
Let $[u,\vec{v}]$ represent the finite quantity in the weight condition  \eqref{general2wq>1hm}.
Using H\"older inequality and \eqref{general2wq>1hm} we have
\bey
\lefteqn{\int_X (\mathcal{T}\vf) g u\ d\mu }\\
&\leq & C\sum_{k,j} \vp(B(Q_{k,j}))\mu(B(Q_{k,j}))^m
\prod_{i=1}^m\left(\dashint_{B(Q_{k,j})} v_i^{-tp_i'} \ d\mu
\right)^{1/(tp_i')} \left(\dashint_{Q_{k,j}} u^{tq} \ d\mu\right)^{1/(tq)}
\\
&&\qquad \times
\prod_{i=1}^m\left(\dashint_{B(Q_{k,j})}(f_iv_i)^{(tp_i')'} \ d\mu(y_i)
\right)^{1/(tp_i')'}
 \left(\dashint_{Q_{k,j}} g^{(tq)'} \ d\mu\right)^{1/(tq)'} \mu(Q_{k,j})
\\
 &\leq & c[u,\vec{v}]
\sum_{k,j}\prod_{i=1}^m\left(\dashint_{B(Q_{k,j})}(f_iv_i)^{(tp_i')'} \
d\mu(y_i) \right)^{\frac{1}{(tp_i')'}}
 \left(\dashint_{Q_{k,j}} g^{(qt)'} \ d\mu\right)^{\frac{1}{(tq)'}}
\mu(Q_{k,j})^{\frac{1}{q'}+\frac{1}{p}} \\
 &\leq &c[u,\vec{v}]
\left(\sum_{k,j}\prod_{i=1}^m\left(\dashint_{B(Q_{k,j})}(f_iv_i)^{(tp_i')'}
\ d\mu(y_i) \right)^{q/(tp_i')'} \mu(Q_{k,j})^{q/p}\right)^{1/q}  \\
 &&\qquad \times \left(\sum_{k,j}  \left(\dashint_{Q_{k,j}} g^{(qt)'} \
d\mu\right)^{q'/(tq)'} \mu(Q_{k,j})\right)^{1/q'} \\
 &\leq & c[u,\vec{v}]
\left(\sum_{k,j}\prod_{i=1}^m\left(\dashint_{B(Q_{k,j})}(f_iv_i)^{(tp_i')'}
\ d\mu(y_i) \right)^{p/(tp_i')'} \mu(E_{k,j})\right)^{1/p}  \\
 &&\qquad \times \left(\sum_{k,j}  \left(\dashint_{Q_{k,j}} g^{(qt)'} \
d\mu\right)^{q'/(tq)'} \mu(E_{k,j})\right)^{1/q'} \\
 &\leq & c[u,\vec{v}]\prod_{i=1}^m \left(\int_X M_{(tp_i')'}(f_iv_i)^{p_i}
\ d\mu\right)^{1/p_i} \left(\int_X M_{(tq)'}(g)^{q'} \ d\mu\right)^{1/q'},
\\
\eey
where in the last line $M_s(g)=M_\mu(|g|^s)^{1/s}$ is the $L^s(\mu)$
average maximal function.  Notice that since $t>1$ we have
$$\int_X (\mathcal{T}\vf) g u\ d\mu \leq  c[u,\vec{v}]\prod_{i=1}^m
\|f_iv_i\|_{L^{p_i}(\mu)} \|g\|_{L^{q'}(\mu)}.$$
By duality we finally obtain
$$\|u\mathcal{T}(\vf)\|_{L^q(\mu)} \leq C \prod_{i=1}^m
\|f_iv_i\|_{L^{p_i}(\mu)}.$$

Next, we address the case when $q\leq 1$.
Since $q\leq 1$, using \eqref{discretize} we have
$$\mathcal{T}\vf(x)^q \leq \sum_{Q\in \cd} \left( \vp(B(Q))\int_{B(Q)^m}\fty\ d \mu(\vy)\right)^q \chi_Q(x)$$
and hence
$$\int_X (u\mathcal{T}\vf )^q \ d\mu \leq \sum_{Q\in \cd}
\left(\vp(B(Q))\int_{B(Q)^m}\fty \ d\mu(\vy)\right)^q \int_Q u^q
\ d\mu.$$
We may now proceed as in the case $q>1$, with $\mathcal{C}^k$ and
$Q_{k,j}$ defined exactly as above.  We assume again that $k_1=-\infty;$ as before, the extra term that appears when $k_1\neq-\infty$ can be handled in the same way as the terms in the sum in $k$ and $j$. Then
\begin{align*}
& \int_X (u\mathcal{T}\vf )^q \ d\mu \leq \sum_{Q\in \cd}
\left(\vp(B(Q))\int_{B(Q)^m}\fty \ d\mu(\vy)\right)^q \int_Q u^q
\ d\mu \\
&=   \sum_{Q\in \cd} \left(\vp(B(Q))\mu(B(Q))^m \prod_{i=1}^m
\dashint_{B(Q)}f_i \ d\mu \right)^q \int_Q u^q \ d\mu  \\
&\leq \sum_k a^{(k+1)q}\sum_j \sum_{\stackrel{Q\in
\mathcal{C}^k}{Q\subset Q_{k,j}}} \vp(B(Q))^q\mu(B(Q))^{mq} \int_Q u^q \
d\mu\\
&\leq  c \sum_{k,j} \vp(B(Q_{k,j}))^q\mu(B(Q_{k,j}))^{mq}
\left(\prod_{i=1}^m \dashint_{B(Q_{k,j})}f_i \ d\mu
\right)^q\int_{Q_{k,j}} u^q \ d\mu\\
&=  c \sum_{k,j} \left[ \vp(B(Q_{k,j}))\mu(B(Q_{k,j}))^{m} \prod_{i=1}^m
\dashint_{B(Q_{k,j})}f_i \ d\mu \left(\dashint_{Q_{k,j}} u^q \
d\mu\right)^{1/q}\right]^q \mu(Q_{k,j}).
\end{align*}
where the second to last inequality follows from a slight adaption of
Lemma \ref{packing}. We now use H\"older's inequality with $tp_i'$ and
$(tp_i')'$ and condition \eqref{general2wq<1hm}.  Let $[u,\vec{v}]$
represent the finite quantity from \eqref{general2wq<1hm}, we obtain
\begin{align*}
& \int_X (u\mathcal{T}\vf )^q \ d\mu\\
&\leq c \sum_{k,j} \left[
\vp(B(Q_{k,j}))\mu(B(Q_{k,j}))^{m} \prod_{i=1}^m \left(\dashint_{
B(Q_{k,j})}v_i^{-tp_i'} \ d\mu\right)^{1/tp_i'} \left(\dashint_{Q_{k,j}} u^q \
d\mu\right)^{1/q}\right]^q \\
& \qquad \times \prod_{i=1}^m\left(\dashint_{B(Q_{k,j})}(f_iv_i)^{(tp_i')'} \
d\mu\right)^{q/(tp_i')'} \mu(Q_{k,j})\\
&\leq  c[u,\vec{v}]^q \sum_{k,j}\prod_{i=1}^m\left(\dashint_{
B(Q_{k,j})}(f_iv_i)^{(tp_i')'} \ d\mu\right)^{q/(tp_i')'} \mu(Q_{k,j})^{q/p}\\
&\leq  c[u,\vec{v}]^q \left( \sum_{k,j}\prod_{i=1}^m\left(\dashint_{
B(Q_{k,j})}(f_iv_i)^{(tp_i')'} \ d\mu\right)^{p/(tp_i')'}
\mu(Q_{k,j})\right)^{q/p}\\
&\leq c[u,\vec{v}]^q \left( \sum_{k,j}\prod_{i=1}^m\left(\dashint_{
B(Q_{k,j})}(f_iv_i)^{(tp_i')'} \ d\mu\right)^{p/(tp_i')'}
\mu(E_{k,j})\right)^{q/p}\\
&\leq  c[u,\vec{v}]^q \prod_{i=1}^m \left(\int_X
M_{(tp_i')'}(f_iv_i)^{p_i} d\mu\right)^{q/p_i}\\
&\leq c[u,\vec{v}]^q \prod_{i=1}^m \|f_iv_i\|_{L^{p_i}(\mu)}^q.
\end{align*}
Thus concluding the proof of the case $q\leq 1$.
\qed

\section{Multilinear potential operators in Orlicz Spaces}\label{secc:multiOrl}

The aim of this section is twofold. We will show that it is possible to substantially generalize conditions \eqref{general2wq>1hm} and \eqref{general2wq<1hm} by resorting to the theory of Orlicz spaces and we will introduce the natural multilinear counterparts to some linear weighted estimates in the context of Orlicz spaces studied in \cite{Pe95, PW01}. These multilinear estimates will allow for a strictly wider range of indices than in the linear case, see Theorem \ref{orlicz2w} and Remark \ref{rmk:orlp<1} below.

We briefly recall some basic facts about Orlicz spaces, and refer the reader to \cite{BS88} and \cite{RR91} for a detailed account of the spaces. A function $\psi:[0,\infty)\ra [0,\infty)$ is called a Young function if it is continuous, convex, increasing, $\psi(0)=0$ and $\psi(t)\ra \infty$ as $t\ra \infty$.   Moreover, we shall assume $\psi$ is normalized so that $\psi(1)=1$ and $\psi$ satisfies the doubling condition, namely there exists constants $C$ and $N$ such that
$$\psi(2t)\leq C\psi(t), \quad \text{for all }t\geq N.$$
For each such function $\psi$ there exists a complementary Young function, denoted $\overline{\psi}$, such that
$$t\leq \psi^{-1}(t)\overline{\psi}^{-1}(t)\leq 2t, \quad t > 0.$$
The Orlicz space $L_\psi=L_\psi(X,\mu)$ is the class of all functions such that
$$\int_X \psi\left(\frac{|f(y)|}{\lambda}\right) d\mu(y) <\infty$$
for some $\lambda>0$.  The space $L_\psi$ is a Banach space equipped with the norm,
$$\|f\|_{\psi}=\inf\left\{\lambda >0: \int_X \psi\left(\frac{|f|}{\lambda}\right) \ d\mu \leq 1\right\}.$$
The space $L_{\overline{\psi}}$ is called \emph{the conjugate space of} $L_\psi$. Orlicz spaces satisfy the generalized H\"older inequality
$$\int_X |fg| \ d\mu \leq c\|f\|_\psi \|g\|_{\overline{\psi}}.$$
Notice that if $\psi(t)=t^r$ for $r\geq 1$ then $L_\psi=L^r(X, d\mu)$ and the complementary function $\overline{\psi}(t)=t^{r'}$ with conjugate space $L_{\overline{\psi}}=L^{r'}(X, d\mu)$.  Other interesting examples include $\psi(t)=t^r[\log(1+ t)]^{-1-\ep}$ for which the complementary is $\overline{\psi}(t)=t^{r'}[\log(1+t)]^{(r'-1)(1+\ep)}$.

Given a ball $B\subset X$ we define the $L_\psi$ average over $B$ by
$$\|f\|_{\psi,B}=\inf\left\{\lambda>0: \dashint_B \psi\left(\frac{|f|}{\lambda}\right) \ d\mu\leq 1\right\}.$$
Once we have defined an average over a single ball we may define a corresponding maximal function by
$$M_\psi f(x) = \sup_{B: x \in B} \|f\|_{\psi,B}$$
where the supremum is over all balls $B$ that contain $x$.
Notice that when $\psi(t)=t^r$ we have
$$\|f\|_{\psi,B}=\left(\dashint_B |f|^r \ d\mu\right)^{1/r},$$
and hence $M_\psi f(x) = M_\mu(|f|^r)^{1/r}$.  Furthermore, in this case,
$$M_\psi :L^p(X, d\mu)\ra L^p(X, d\mu)$$
if and only if $p>r$.  For a general $\psi$, P\'erez and Wheeden \cite{PW01} established the following characterization:
$$\int_X (M_\psi f)^p d\mu \leq C\int_X |f|^p \ d\mu$$
for all $f\in L^p(X, d\mu)$ if and only if there is a constant $c>0$ such that
\begin{equation} \label{Bpcond}\int_c^\infty \frac{\psi(t)}{t^p} \frac{dt}{t}\approx \int_c^\infty \left(\frac{t^{p'}}{\overline{\psi}(t)}\right)^{p-1} \frac{dt}{t}<\infty.\end{equation}
In the context of Orlicz spaces we have

\begin{theor}\label{orlicz2w} Suppose that $1<p_1, \cdots,
 p_m<\infty,$ $\frac{1}{p}=\frac{1}{p_1}+\cdots+\frac{1}{p_m}$ and $\frac{1}{m}<p\le q<\infty.$
Let $(X,\rho,\mu)$ be a space of homogeneous type, $K$ is a kernel such that \eqref{growth} holds with $\vp$ satisfying \eqref{mainassump} and $\Psi,\Phi_1,\ldots, \Phi_m$ be Young functions satrisfying
\begin{equation} \label{orliczq'}\int_c^\infty \left(\frac{t^q}{\Psi(t)}\right)^{q'-1}\frac{dt}{t}<\infty,\end{equation}
and
\begin{equation}\label{orliczp}\int_c^\infty \left(\frac{t^{p_i'}}{\Phi_i(t)}\right)^{p_i-1}\frac{dt}{t}<\infty \qquad 1\leq i\leq m\end{equation}
for some $c>0$.  Furthermore, let $u, v_k,$
$k=1,\cdots,m$ be weights defined on $X$ that satisfy condition
\eqref{orlicz2wq>1hm} if $q>1$ or condition \eqref{orlicz2wq<1hm} if $q\le 1,$
where
\begin{equation}\label{orlicz2wq>1hm}
\sup_{B\, \rho\text{-ball}} \vp(B) \mu(B)^{1/q + 1/{p_1}'+\cdots +1/{p_m'}}
\|u\|_{\Psi,B} \prod_{j=1}^m \|v_i^{-1}\|_{\Phi_i,B} < \infty;
\end{equation}
\begin{equation}\label{orlicz2wq<1hm}
\sup_{B\, \rho\text{-ball}} \vp(B) \mu(B)^{1/q + 1/{p_1}'+\cdots+1/{p_m}'}
\left( \dashint_B u^{q} d\mu \right)^{1/q} \prod_{j=1}^m
\|v_i^{-1}\|_{\Phi_i,B}<
\infty.
\end{equation}
Then there exists a constant $C$  such that
\[
\left(\int_{X}\left(\abs{\mathcal{T}\vec{f}}u\right)^q\,d\mu\right)^{1/q}\le
C \prod_{k=1}^m\left(\int_X(\abs{f_k}v_k)^{p_k}\,d\mu\right)^{1/p_k}
\]
for all $\vec{f}\in L^{p_1}(X,v_1^{p_1})\times\cdots\times
L^{p_m}(X,v_m^{p_m}).$
\end{theor}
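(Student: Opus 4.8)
The plan is to retrace the proof of Theorem~\ref{general2w} from \S\ref{secc:proofialpha} essentially verbatim through the complete discretization of $\mathcal{T}$, and only in the final estimate to substitute the generalized H\"older inequality for Orlicz averages for the ordinary H\"older inequality, and the P\'erez--Wheeden characterization \eqref{Bpcond} for the elementary $L^p$-boundedness of the power maximal operators $M_s$. All the geometric and combinatorial ingredients of that proof---Christ's dyadic cubes $\cd$, the enlarged balls $B(Q)$, the multilinear maximal function $\cm_{B(\cd)}$, the level sets $\mathcal{S}^k$, the maximal subcubes $Q_{k,j}$ with $\mathcal{S}^k=\bigcup_j Q_{k,j}$, the pairwise disjoint sets $E_{k,j}=Q_{k,j}\setminus\mathcal{S}^{k+1}$ with $\mu(E_{k,j})\ge\gamma\,\mu(Q_{k,j})$, and Lemma~\ref{packing} together with \eqref{mainassump}---are insensitive to the weights and carry over unchanged. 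In particular, assuming $f_i\ge0$ and (as there) $k_1=-\infty$, one reaches for $q>1$ and $g\ge0$ the bound
\[
\int_X (\mathcal{T}\vf)\,g\,u\,d\mu\le C\sum_{k,j}\vp(B(Q_{k,j}))\,\mu(B(Q_{k,j}))^m\Big(\prod_{i=1}^m\dashint_{B(Q_{k,j})}f_i\,d\mu\Big)\Big(\dashint_{Q_{k,j}}g\,u\,d\mu\Big)\mu(Q_{k,j}),
\]
and for $q\le1$ the analogous bound for $\int_X(u\,\mathcal{T}\vf)^q\,d\mu$.

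For $q>1$ I would, in each summand, apply the generalized H\"older inequality on the ball $B(Q_{k,j})$ to write $\dashint_{B(Q_{k,j})}f_i\,d\mu=\dashint_{B(Q_{k,j})}(f_iv_i)\,v_i^{-1}\,d\mu\lesssim\|f_iv_i\|_{\overline{\Phi_i},B(Q_{k,j})}\|v_i^{-1}\|_{\Phi_i,B(Q_{k,j})}$ and $\dashint_{Q_{k,j}}g\,u\,d\mu\lesssim\|g\|_{\overline{\Psi},B(Q_{k,j})}\|u\|_{\Psi,B(Q_{k,j})}$, the latter using $Q_{k,j}\subset B(Q_{k,j})$ and $\mu(Q_{k,j})\sim\mu(B(Q_{k,j}))$. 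Regrouping the factors $\vp(B(Q_{k,j}))\mu(B(Q_{k,j}))^m\|u\|_{\Psi,B(Q_{k,j})}\prod_i\|v_i^{-1}\|_{\Phi_i,B(Q_{k,j})}$ and using \eqref{orlicz2wq>1hm} together with $\sum_i1/p_i'=m-1/p$ bounds this group by $[u,\vec v]\,\mu(Q_{k,j})^{1/p-1/q}$, where $[u,\vec v]$ is the finite supremum in \eqref{orlicz2wq>1hm}. I would then apply H\"older's inequality for the sum over $(k,j)$ with exponents $q,q'$, use that $q/p\ge1$ (so that $(\sum_{k,j} b_{k,j}^{q/p})^{1/q}\le(\sum_{k,j}b_{k,j})^{1/p}$), replace $\mu(Q_{k,j})$ by $\mu(E_{k,j})$, dominate $\|f_iv_i\|_{\overline{\Phi_i},B(Q_{k,j})}$ and $\|g\|_{\overline{\Psi},B(Q_{k,j})}$ pointwise on $E_{k,j}$ by $M_{\overline{\Phi_i}}(f_iv_i)$ and $M_{\overline{\Psi}}(g)$, and invoke disjointness of the $E_{k,j}$ to land on $\prod_i\big(\int_X M_{\overline{\Phi_i}}(f_iv_i)^{p_i}\,d\mu\big)^{1/p_i}\big(\int_X M_{\overline{\Psi}}(g)^{q'}\,d\mu\big)^{1/q'}$ (using $\sum_i p/p_i=1$). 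Since the complementary function of $\overline{\Phi_i}$ is $\Phi_i$ and that of $\overline{\Psi}$ is $\Psi$, hypotheses \eqref{orliczp} and \eqref{orliczq'} are precisely the conditions \eqref{Bpcond} making $M_{\overline{\Phi_i}}$ bounded on $L^{p_i}(\mu)$ and $M_{\overline{\Psi}}$ bounded on $L^{q'}(\mu)$; hence $\int_X(\mathcal{T}\vf)gu\,d\mu\lesssim[u,\vec v]\prod_i\|f_iv_i\|_{L^{p_i}(\mu)}\|g\|_{L^{q'}(\mu)}$, and duality (recall $u\,\mathcal{T}\vf\ge0$) completes the case $q>1$.

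For $q\le1$ the argument runs along the same lines but without the duality step: from the discretized bound for $\int_X(u\,\mathcal{T}\vf)^q\,d\mu$ I would apply the generalized H\"older inequality only to the factors $\dashint_{B(Q_{k,j})}f_i\,d\mu$, regroup $\vp(B(Q_{k,j}))\mu(B(Q_{k,j}))^m\prod_i\|v_i^{-1}\|_{\Phi_i,B(Q_{k,j})}\big(\dashint_{B(Q_{k,j})}u^q\,d\mu\big)^{1/q}$ (again trading $Q_{k,j}$ for the comparable $B(Q_{k,j})$), bound it by $[u,\vec v]\,\mu(Q_{k,j})^{1/p-1/q}$ via \eqref{orlicz2wq<1hm} and $m-1/q-\sum_i1/p_i'=1/p-1/q$, raise to the $q$th power, sum against $\mu(Q_{k,j})$, use $q/p\ge1$ to collapse the sum, and finish exactly as before with $M_{\overline{\Phi_i}}$ and \eqref{Bpcond}/\eqref{orliczp}.

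I do not expect a genuinely new obstacle: the geometric and combinatorial core is inherited wholesale from the proof of Theorem~\ref{general2w}, and the real work is bookkeeping---pairing $v_i^{-1}$ with $f_iv_i$ (and $u$ with $g$) through the generalized H\"older inequality so that the weight side reproduces \eqref{orlicz2wq>1hm}/\eqref{orlicz2wq<1hm} while the function side produces exactly the Orlicz maximal operators whose $L^{p_i}$- and $L^{q'}$-boundedness is encoded by \eqref{orliczp} and \eqref{orliczq'} through \eqref{Bpcond}. The only point demanding a little care is the comparison of $L_\psi$-averages over $Q_{k,j}$ and over the comparable ball $B(Q_{k,j})$, which is routine given the doubling of $\mu$ and of the Young functions.
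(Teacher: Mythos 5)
Your proposal is correct and follows essentially the same route as the paper's sketch: reuse the full discretization machinery from Theorem~\ref{general2w} verbatim, replace ordinary H\"older with the generalized Orlicz H\"older inequality so that the weight side regroups into \eqref{orlicz2wq>1hm} (or \eqref{orlicz2wq<1hm}) and the function side produces $M_{\overline{\Phi}_i}$ and $M_{\overline{\Psi}}$, and then invoke the P\'erez--Wheeden criterion \eqref{Bpcond} via \eqref{orliczp}, \eqref{orliczq'} to close the estimate. The index bookkeeping ($\sum_i 1/p_i' = m - 1/p$, the $q/p\ge1$ collapse, $\sum_i p/p_i=1$) and the $Q_{k,j}$ versus $B(Q_{k,j})$ comparison are handled as in the paper, and your treatment of $q\le1$ correctly mirrors the corresponding step in the proof of Theorem~\ref{general2w}.
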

\begin{remark} Theorem \ref{general2w} is contained in Theorem \ref{orlicz2w} since it corresponds to
$$\Psi(t)= t^{rq}, \Phi_1(t)=t^{rp_1'},\ldots, \Phi_m(t)=t^{rp_m'}$$
whose complementary functions satisfy conditions \eqref{orliczq'} and \eqref{orliczp}.
 \end{remark}
 We provide a brief sketch of the proof of Theorem \ref{orlicz2w} when $q>1$ and $\diam_\rho(X) = \infty$.  The proof when $q<1$ will be similar to that of Theorem \ref{general2w}.
 \begin{proof}[Proof of Theorem \ref{orlicz2w}]
 The same decomposition techniques as in the proof of Theorem \ref{general2w} yield
 \bey
 \int_X (\mathcal{T}\vf) g u\ d\mu &\leq&  C\sum_{k,j}  \vp(B(Q_{k,j}))\mu(B(Q_{k,j}))^m \prod_{i=1}^m \dashint_{ B(Q_{k,j})}f_i(y_i) \ d\mu(y_i)  \\
&&\qquad  \quad \times \dashint_{Q_{k,j}} gu \ d\mu \ \ \mu(Q_{k,j}).
\eey
Using the generalized H\"older inequality for Orlicz spaces we have
 \begin{align*}
 & \int_X (\mathcal{T}\vf) g u\ d\mu\\
  &\leq  C\sum_{k,j}  \vp(B(Q_{k,j}))\mu(B(Q_{k,j}))^m \prod_{i=1}^m \|f_iv_i\|_{\overline{\Phi}_i,B(Q_{k,j})} \|v_i^{-1}\|_{\Phi_i, B(Q_{k,j})} \\
&\qquad  \quad \times \|g\|_{\overline{\Psi}, B(Q_{k,j})} \|u \|_{\Psi, B(Q_{k,j})}\ \ \mu(Q_{k,j})\\
&\leq  c\sum_{k,j} \prod_{i=1}^m \|f_iv_i\|_{\overline{\Phi}_i,  B(Q_{k,j})} \|g\|_{\overline{\Psi},B(Q_{k,j})} \mu(Q_{k,j})^{1/q'+1/p}\\
&\leq  c\left(\sum_{k,j} \left(\prod_{i=1}^m \|f_iv_i\|_{\overline{\Phi}_i,  B(Q_{k,j})}\right)^q \mu(Q_{k,j})^{q/p}\right)^{1/q}\left(\sum_{k,j} \|g\|_{\overline{\Psi}, B(Q_{k,j})}^{q'} \mu(Q_{k,j})\right)^{1/q'}\\
&\leq  c\left(\sum_{k,j} \left(\prod_{i=1}^m \|f_iv_i\|_{\overline{\Phi}_i,  B(Q_{k,j})}\right)^p \mu(E_{k,j})\right)^{1/p}\left(\sum_{k,j} \|g\|_{\overline{\Psi}, B(Q_{k,j})}^{q'} \mu(E_{k,j})\right)^{1/q'}\\
&\leq  c \left( \prod_{i=1}^m \left(\int_X  M_{\overline{\Phi}_i}( f_iv_i)^{p_i} \ d\mu\right)^{1/p_i} \right) \left( \int_X (M_{\overline{\Psi}} g)^{q'} \ d\mu\right)^{1/q'}\\
&\leq  C \left(\prod_{i=1}^m \|f_iv_i\|_{L^{p_i}(\mu)}\right) \|g\|_{L^{q'}(\mu)} \
\end{align*}
where the last line follows since $\overline{\Psi},\overline{\Phi}_1,\ldots,\overline{\Phi}_m$ satisfy \eqref{orliczq'} and \eqref{orliczp} so
$$\left\{ \begin{array}{ccc} M_{\overline{\Psi}} : L^{q'}(X, d\mu) &\ra& L^{q'}(X, d\mu) \\ M_{\overline{\Phi}_1}:L^{p_1}(X, d\mu)& \ra& L^{p_1}(X, d\mu)\\
 & \vdots & \\
 M_{\overline{\Phi}_m}:L^{p_m}(X, d\mu)& \ra& L^{p_m}(X, d\mu) .\end{array} \right.
 $$

\end{proof}
We now give some applications of Theorem \ref{orlicz2w}.  For simplicity let $q=p$, and let $\Psi,\Phi_1,\ldots,\Phi_m$ be the Young functions defined by
$$\Psi(t)=t^{p}(\log(1+t))^{p-1+\ep}, \Phi_1(t) =t^{p_1'}(\log(1+t))^{p_1'-1+\ep}, \ldots, \Phi_m(t)= t^{p_m'}(\log(1+t))^{p_m'-1+\ep}.$$
Notice that these functions satisfy conditions  \eqref{orliczq'} and \eqref{orliczp}.  We denote the Orlicz spaces as
$$L_\Psi=L^p(\log L)^{p-1+\ep}, L_{\Phi_1}=L^{p_1'}(\log L)^{p_1'-1+\ep}, \ldots, L_{\Phi_m}=L^{p_m'}(\log L)^{p_m'-1+\ep}.$$
Thus as a corollary we have the following result.
\begin{corollary}  Suppose that $1<p_1, \ldots, p_m<\infty,$ $\frac{1}{p}=\frac{1}{p_1}+\cdots+\frac{1}{p_m}$, $(X,\rho,\mu)$ is a space of homogeneous type, and $K$ is a kernel such that \eqref{growth} holds with $\vp$ satisfying \eqref{mainassump}.  Let $\ep>0$ and $u,v_1,\ldots, v_m$ be weights that satisfy
\begin{equation}\label{LlogLp>1} \sup_B \vp(B) \mu(B)^{1/q+1/p_1'+\cdots + 1/p_m'} \|u\|_{L^p(\log L)^{p-1+\ep},B} \prod_{i=1}^m \|v_i^{-1}\|_{L^{p_i'}(\log L)^{p_i'-1+\ep},B} <\infty\end{equation}
if $p>1$, or
\begin{equation}\label{LlogLp<1}\sup_B \vp(B) \mu(B)^{1/q+1/p_1'+\cdots + 1/p_m'} \left(\dashint_B u^p \ d\mu\right)^{1/p} \prod_{i=1}^m \|v_i^{-1}\|_{L^{p_i'}(\log L)^{p_i'-1+\ep},B} <\infty\end{equation}
if $p\leq 1$.  Then
\[
\left(\int_{X}\left(\abs{u\mathcal{T}\vec{f}}\right)^p\,d\mu\right)^{1/p}\le
C \prod_{i=1}^m\left(\int_X(\abs{f_i}v_i)^{p_i}\,d\mu\right)^{1/p_i}
\]
for all $\vec{f}\in L^{p_1}(X,v_1^{p_1}d\mu)\times\cdots\times
L^{p_m}(X,v_m^{p_m}d\mu).$
\end{corollary}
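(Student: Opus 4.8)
The plan is to obtain the corollary as an immediate consequence of Theorem~\ref{orlicz2w}, applied with $q=p$ (as fixed in the paragraph preceding the statement, so that $1/q$ in \eqref{LlogLp>1}--\eqref{LlogLp<1} is read as $1/p$) and with the explicit Young functions
\[
\Psi(t)=t^{p}(\log(1+t))^{p-1+\ep},\qquad
\Phi_i(t)=t^{p_i'}(\log(1+t))^{p_i'-1+\ep},\quad i=1,\dots,m .
\]
For this choice, $L_\Psi=L^{p}(\log L)^{p-1+\ep}$ and $L_{\Phi_i}=L^{p_i'}(\log L)^{p_i'-1+\ep}$, so that the hypotheses \eqref{LlogLp>1} and \eqref{LlogLp<1} of the corollary are, verbatim, conditions \eqref{orlicz2wq>1hm} and \eqref{orlicz2wq<1hm} of Theorem~\ref{orlicz2w}, and the conclusion is likewise identical. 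Hence the entire proof reduces to two verifications: that $\Psi$ (when $p>1$) and each $\Phi_i$ are admissible Young functions, and that they satisfy the integrability conditions \eqref{orliczq'} and \eqref{orliczp}.

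For the structural requirements: since $p_i>1$ we also have $p_i'>1$, so $t\mapsto t^{p_i'}$ is convex and increasing; the factor $(\log(1+t))^{p_i'-1+\ep}$ is increasing with positive exponent, and after modifying $\Phi_i$ near the origin and rescaling so that $\Phi_i(1)=1$ (which alters neither the space $L_{\Phi_i}$ nor the averages $\|\cdot\|_{\Phi_i,B}$) one obtains a normalized Young function; the doubling inequality $\Phi_i(2t)\le C\Phi_i(t)$ for $t$ large is immediate from the power-times-logarithm form. When $p>1$ the same remarks give that $\Psi$ is an admissible Young function. When $p\le1$ we are in case \eqref{LlogLp<1}, where $\Psi$ enters only through the power average $\left(\dashint_B u^{p}\,d\mu\right)^{1/p}$; there neither the Young-function structure of $\Psi$ nor condition \eqref{orliczq'} is needed, exactly as in the $q\le1$ part of the proof of Theorem~\ref{general2w}.

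For the integrability conditions: using $q'-1=\tfrac{1}{p-1}$ and $p_i-1=\tfrac{1}{p_i'-1}$ (which follow from $\tfrac1p+\tfrac1{p'}=1$ and $\tfrac1{p_i}+\tfrac1{p_i'}=1$), one computes
\[
\Bigl(\frac{t^{q}}{\Psi(t)}\Bigr)^{q'-1}=(\log(1+t))^{-1-\frac{\ep}{p-1}},\qquad
\Bigl(\frac{t^{p_i'}}{\Phi_i(t)}\Bigr)^{p_i-1}=(\log(1+t))^{-1-\frac{\ep}{p_i'-1}} .
\]
After the change of variables $s=\log(1+t)$, for which $\tfrac{dt}{t}$ and $ds$ are comparable on $[c,\infty)$, both integrals in \eqref{orliczq'} and \eqref{orliczp} reduce to $\int_{c'}^{\infty}s^{-1-\delta}\,ds$ with $\delta>0$, hence are finite; this is where the hypothesis $\ep>0$ is used, and it is essentially the only computation in the proof. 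With these two verifications done, Theorem~\ref{orlicz2w} applies directly and yields the stated weighted inequality.

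I do not expect a genuine obstacle here. The one place deserving a moment's care is that conditions \eqref{orliczq'} and \eqref{orliczp}, stated as conditions on $\Psi$ and $\Phi_i$ themselves, are precisely what the P\'erez--Wheeden characterization \eqref{Bpcond} turns into the boundedness statements $M_{\overline{\Psi}}\colon L^{q'}(X,d\mu)\to L^{q'}(X,d\mu)$ and $M_{\overline{\Phi}_i}\colon L^{p_i}(X,d\mu)\to L^{p_i}(X,d\mu)$ used at the end of the proof of Theorem~\ref{orlicz2w}; one should make sure the complementary functions have not been inadvertently swapped with the original ones along the way. Once that bookkeeping is confirmed, the corollary is proved.
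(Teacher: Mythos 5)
Your proof is correct and coincides with the paper's own route: the paper sets $q=p$, defines $\Psi(t)=t^{p}(\log(1+t))^{p-1+\ep}$ and $\Phi_i(t)=t^{p_i'}(\log(1+t))^{p_i'-1+\ep}$, and invokes Theorem~\ref{orlicz2w} after noting (without details) that these satisfy \eqref{orliczq'} and \eqref{orliczp}. The paper leaves that verification to the reader; your explicit computation of the exponents $-1-\ep/(p-1)$ and $-1-\ep/(p_i'-1)$ and the substitution $s=\log(1+t)$ supplies precisely the omitted step, and your bookkeeping remark about the P\'erez--Wheeden characterization \eqref{Bpcond} being an involution in the complementary function is also correct.
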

We now use these $L(\log L)$ results to obtain a different estimate in terms of the fractional maximal function of a weight.  Let $\gamma$ be a functional on the balls of $X$ and define $M_\gamma$ as in \cite{PW01} by
$$M_\gamma f(x) =\sup_{B : x \in B} \gamma(B)\int_B |f| \ d\mu.$$
When $X=\rr^n$, $\gamma(B)=|B|^{\al/n-1}$ corresponds to the fractional maximal operator $M_\al$.  Let $M^k$ denote the $k$-th iterate of the Hardy-Littlewood maximal operator, i.e., $M^k=M_\mu \stackrel{(k \ {\rm times})}{\circ \cdots \circ} M_\mu$.  Also, for $1<p<\infty$, $[p]$ will denote the  greatest integer less than or equal to $p$.  We have the following result.
\begin{corollary}\label{fracmax} Let $1<p_1,\ldots, p_m<\infty$, $1/p=1/p_1+\cdots+1/p_m$, and $\mathcal{T}$, $K$, and $\vp$ be as in Theorem \ref{general2w}.  Furthermore, suppose $0<\al_1,\cdots,\al_m<1$ and $\al_1+\cdots +\al_m=1$, and $\tilde{\vp}_i$ is the functional: $B\mapsto (\vp(B)^{\al_i}\mu(B))^{p_i}/\mu(B)$ and $w$ be any weight.  Then if $p> 1$ we have
\begin{equation} \label{fracmaxp>1} \left(\int_X |\mathcal{T}\vf|^p w \ d\mu\right)^{1/p} \leq C\prod_{i=1}^m \left(\int_X |f_i|^{p_i} M_{\tilde \vp_i}(M^{[p]}w) \ d\mu\right)^{1/p_i}\end{equation}
and if $p\leq 1$
\begin{equation} \label{fracmaxp<1} \left(\int_X |\mathcal{T}\vf|^p w \ d\mu\right)^{1/p} \leq C\prod_{i=1}^m \left(\int_X |f_i|^{p_i} M_{\tilde \vp_i}(w) \ d\mu\right)^{1/p_i}.
\end{equation}
\end{corollary}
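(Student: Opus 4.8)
The plan is to deduce this from the $L(\log L)$-type results (inequalities \eqref{LlogLp>1} and \eqref{LlogLp<1}) by choosing the weights appropriately and then controlling the resulting Orlicz averages of $w$ (and its maximal-function iterates) by the fractional maximal functions $M_{\tilde\vp_i}$. First I would set $u^p = w$ and $v_i^{p_i} = M_{\tilde\vp_i}(M^{[p]}w)$ when $p>1$ (respectively $v_i^{p_i} = M_{\tilde\vp_i}(w)$ when $p\le 1$), so that the right-hand sides of \eqref{fracmaxp>1} and \eqref{fracmaxp<1} become exactly $\prod_i \|f_i v_i\|_{L^{p_i}(\mu)}^{p_i/\cdot}$ and the left-hand sides match. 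The task then reduces to verifying the weight hypotheses \eqref{LlogLp>1} (if $p>1$) or \eqref{LlogLp<1} (if $p\le 1$) for this choice of $u,v_1,\dots,v_m$ with $q=p$ and $\Psi(t)=t^p(\log(1+t))^{p-1+\ep}$, $\Phi_i(t)=t^{p_i'}(\log(1+t))^{p_i'-1+\ep}$.

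The heart of the matter is a pointwise/averaged comparison between the $L_{\Phi_i}$-average of $v_i^{-1}$ over a ball $B$ and the quantity $\big(\inf_B M_{\tilde\vp_i}(\cdot)\big)^{-1/p_i}$. The key observations I would use are: (a) the Orlicz norm $\|h\|_{L^{r}(\log L)^{r-1+\ep},B}$ is comparable to $(\dashint_B M_\mu^{N}(h\chi_{cB})^{r}\,d\mu)^{1/r}$ for a suitable number $N$ of iterations of $M_\mu$ depending on $r$ — indeed, $L(\log L)^{k}$ averages are controlled by $k+1$ iterations of the Hardy–Littlewood maximal operator, a fact going back to the $L\log L$-characterization behind \eqref{Bpcond}; and (b) for $r=p_i'$ one has $[p_i'-1+\ep]$-many (hence at most $[p]$-many, after bookkeeping with $1/p = \sum 1/p_i$) iterations suffice, which is precisely why the iterate $M^{[p]}$ appears in \eqref{fracmaxp>1}. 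Concretely, for $p>1$ I would estimate
\[
\|v_i^{-1}\|_{\Phi_i,B}
= \Big\| \big(M_{\tilde\vp_i}(M^{[p]}w)\big)^{-1/p_i}\Big\|_{\Phi_i,B}
\lesssim \big(\tilde\vp_i(B)\,{\textstyle\int_B} M^{[p]}w\,d\mu\big)^{-1/p_i}\Big(\dashint_B \cdots\Big),
\]
where the averages of $M^{[p]}w$ that are produced collapse, by the Coifman–Rochberg / $L\log L$ self-improvement of $M_\mu$, back into $M^{[p]}w$ itself; here one crucially uses that $M_{\tilde\vp_i}(M^{[p]}w) \gtrsim \tilde\vp_i(B)\int_B M^{[p]}w$ on $B$. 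Plugging in $\tilde\vp_i(B) = (\vp(B)^{\al_i}\mu(B))^{p_i}/\mu(B)$ and $u=w^{1/p}$, multiplying the $m$ factors together, and using $\al_1+\cdots+\al_m=1$ together with $1/q + 1/p_1' + \cdots + 1/p_m' = 1/p + (m - 1/p) = m$ (so the power of $\mu(B)$ is $m$), the product telescopes: the $\vp(B)$'s combine to $\vp(B)^{\sum\al_i}=\vp(B)$ which cancels the $\vp(B)$ in front, the $\mu(B)$ powers cancel, and what remains is $\|u\|_{\Psi,B} \cdot \big(\dashint_B M^{[p]}w\big)^{-1/p}$, which is $\lesssim 1$ because $\|w^{1/p}\|_{\Psi,B} = \|w^{1/p}\|_{L^p(\log L)^{p-1+\ep},B} \lesssim (\dashint_B M^{[p]}w\,d\mu)^{1/p}$ — again the $L\log L$ maximal estimate, now with exponent $p$ requiring $[p]$ iterations. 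This verifies \eqref{LlogLp>1}, and the Corollary follows. The case $p\le 1$ is easier: there is no $u$-average to bound by Orlicz means (condition \eqref{LlogLp<1} only asks for $(\dashint_B u^p)^{1/p}=(\dashint_B w)^{1/p}$), so no outer iteration of $M_\mu$ on $w$ is needed, which is exactly why \eqref{fracmaxp<1} uses $M_{\tilde\vp_i}(w)$ rather than $M_{\tilde\vp_i}(M^{[p]}w)$; the same telescoping of the $\vp$- and $\mu$-powers applies.

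The main obstacle I anticipate is the bookkeeping in step (b): pinning down exactly how many iterates of $M_\mu$ are needed to dominate an $L^{p_i'}(\log L)^{p_i'-1+\ep}$ average and checking that, after combining the $m$ factors under the constraint $\sum 1/p_i = 1/p$, the total number of iterations needed on $w$ is $[p]$ and not something larger. One must be careful that the relation $p_i' - 1 + \ep$ (number of "logs" per factor) interacts correctly with the exponents, and that the self-improving property $\dashint_B M^k w \lesssim \inf_B M^{k+1}w$ is invoked with the right index so that the averages genuinely collapse into $M_{\tilde\vp_i}(M^{[p]}w)$ rather than into a larger iterate. A secondary technical point is that $M_{\tilde\vp_i}$ is defined with the non-averaged integral $\int_B$, so one must keep the $\mu(B)$ factors straight when passing between $\dashint_B$ and $\int_B$; this is where the exponent $1/q + \sum 1/p_i' = m$ does the essential cancellation against $\vp(B)\mu(B)^m$. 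None of these steps is deep, but getting the constants and the integer $[p]$ to line up is where the real work lies.
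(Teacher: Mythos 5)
Your proposal is correct and follows essentially the paper's argument: choose $u^p=w$ and $v_i^{p_i}=M_{\tilde\vp_i}(M^{[p]}w)$ (respectively $M_{\tilde\vp_i}(w)$), reduce to verifying \eqref{LlogLp>1} or \eqref{LlogLp<1} with $q=p$, use $\sum_i\al_i=1$ and $1/q+\sum_i 1/p_i'=m$ to telescope the $\vp(B)$ and $\mu(B)$ powers, take $\ep=[p]-p+1$ so that $\|w^{1/p}\|^p_{\Psi,B}=\|w\|_{L(\log L)^{[p]},B}$, and close with P\'erez--Wheeden's Lemma 8.5 giving $\|w\|_{L(\log L)^{[p]},B}\lesssim\dashint_{\hat B}M^{[p]}w\,d\mu$. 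One point of streamlining: the obstacle you anticipate about counting iterates of $M_\mu$ for the $L^{p_i'}(\log L)$-averages of $v_i^{-1}$ does not actually arise, because once you note $M_{\tilde\vp_i}(M^{[p]}w)(x)\ge\tilde\vp_i(\hat B)\int_{\hat B}M^{[p]}w\,d\mu$ for all $x\in B$, the function $v_i^{-1}$ is dominated on $B$ by a single constant, and the Orlicz average $\|v_i^{-1}\|_{\Phi_i,B}$ of anything bounded by a constant is at most that constant (by the normalization $\Phi_i(1)=1$); no maximal-function self-improvement is needed on the $v_i$ side, and the $[p]$-iteration lemma is invoked only once, to control $\|u\|_{\Psi,B}$.
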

Before we present the proof of Corollary \ref{fracmax} a few remarks are in order.

\begin{remark}\label{rmk:orlp<1} Inequalities \eqref{fracmaxp<1} and \eqref{fracmaxp>1} are new even in the Euclidean setting and they constitute the multilinear counterparts to the linear ones in \cite[Theorem 2.5]{PW01}.

Also, in the Euclidean setting and when $\tilde{\vp_i}(B)\approx r(B)^{p_i\al_i}/|B|$, $i=1, \ldots, m$ for $\al_1+\cdots + \al_m =\al$ (i.e., $\mathcal{T}$ is the multilinear fractional integral operator $\mathcal{I}_\al$), inequalities \eqref{fracmaxp<1} and \eqref{fracmaxp>1} read as follows: If $p>1$
\begin{equation}\label{carlosp>1}\left(\int_{\rr^n} |\mathcal{I}_\al\vf|^p w \ dx\right)^{1/p} \leq C\prod_{i=1}^m \left(\int_{\rr^n} |f_i|^{p_i} M_{p_i\al_i}(M^{[p]}w) \ dx \right)^{1/p_i}\end{equation}
and if $p\leq 1$
\begin{equation}\label{carlosp<1}\left(\int_{\rr^n} |\mathcal{I}_\al\vf|^p w \ dx\right)^{1/p} \leq C
\prod_{i=1}^m \left(\int_{\rr^n} |f_i|^{p_i} M_{p_i\al_i}(w) \ dx\right)^{1/p_i}. \end{equation}

In turn, \eqref{carlosp>1} and \eqref{carlosp<1} arise as the multilinear versions of the linear inequalities of the form
\begin{equation} \label{CP} \int_{\rr^n} |I_\al f|^p \ w\ dx \leq C \int_{\rr^n} |f|^p M_{\al p}(M^{[p]} w) \ dx,
\end{equation}
which were addressed in \cite{Pe95} for $p > 1$.  It must be observed that in the linear case ($m=1$), inequality \eqref{carlosp<1} with $p=1$ is false, see \cite[Theorem 2.1]{CPSS05}. Therefore, if $m > 1$, inequality \eqref{carlosp<1} (and, more generally, inequality \eqref{fracmaxp<1}) allows for a range of indices forbidden in the linear case.

Finally, notice that inequality \eqref{carlosp>1} does not follow (at least directly) from the fact that,
$$\mathcal{I}_\al \vf \leq I_{\al_1} f_1 \cdots I_{\al_m} f_m.$$
Indeed, if one uses this product bound, followed by H\"older's inequality and \eqref{CP}, one obtains
\begin{eqnarray}\left(\int_{\rr^n} |\mathcal{I}_\al\vf|^p w \ dx\right)^{1/p} &\leq& \left(\int_{\rr^n} \Pi_{i=1}^m |I_{\al_i} f_i| w \ dx \right)^{1/p}\nonumber \\
&\leq & \prod_{i=1}^m \left(\int_{\rr^n} |I_{\al_i} f_i|^{p_i} w \ dx\right)^{1/p_i}\nonumber\\
&\leq &  \prod_{i=1}^m \left(\int_{\rr^n} |f_i|^{p_i} M_{{p_i\al_i}}(M^{[p_i]}w) \ dx\right)^{1/p_i}.\label{lesssharp}
\end{eqnarray}
However, since $p<p_i$ inequality \eqref{carlosp>1} is sharper than \eqref{lesssharp}.
\end{remark}

\begin{proof}[Proof of Corollary \ref{fracmax}]
In order to prove Corollary \ref{fracmax} we will show that there exists $\ep>0$ such that the weights
$$u=w^{1/p}, v_1= M_{\tilde \vp_1}(w)^{1/p_1},\ldots, v_m=M_{\tilde \vp_m}(w)^{1/p_m}$$
satisfy \eqref{LlogLp<1} if $p\leq 1$ or the weights
$$u=w^{1/p}, v_1= M_{\tilde \vp_1}(M^{[p]}w)^{1/p_1},\ldots, v_m=M_{\tilde \vp_m}(M^{[p]}w)^{1/p_m}$$
satisfy \eqref{LlogLp>1} if $p>1$.  We start with the case $p\leq1$.  Notice that for any ball $B$ and $x\in B$ we have
$$M_{\tilde \vp_i}(w)(x) \geq \frac{(\vp(B)^{\al_i} \mu(B))^{p_i}}{\mu(B)} \int_B w \ d\mu.$$
Hence,
$$\prod_{i=1}^m \|M_{\tilde \vp_i}(w)^{-1/p_i}\|_{L^{p_i'}(\log L)^{p_i'-1+\ep},B} \leq \frac{1}{\vp(B) \mu(B)^m}\prod_{i=1}^m \left(\dashint_B w \ d\mu\right)^{-1/p_i}$$
from which \eqref{LlogLp<1} follows.  Now for the case $p> 1$.  Let $\delta>0$ and $\hat{B}=(1+\delta)\kappa B$.  For any $x\in B$ we have
$$\left(M_{\tilde \vp_i}(M^{[p]}w)(x)\right)^{-1/p_i} \leq \frac{1}{\vp(\hat{B})^{\al_i} \mu(\hat{B})} \left( \dashint_{\hat{B}} M^{[p]} w \ d\mu\right)^{-1/p_i}.$$
Hence,
\bey \lefteqn{\vp(B) \mu(B)^{m} \|w^{1/p}\|_{L^p(\log L)^{p-1+\ep},B} \prod_{i=1}^m \|M_{\tilde{\vp}_i}(M^{[p]}w)^{-1/p_i}\|_{L^{p_i'}(\log L)^{p_i'-1+\ep},B}} \\
&\leq & C\|w^{1/p}\|_{L^p(\log L)^{p-1+\ep},B} \prod_{i=1}^m\left( \dashint_{\hat{B}} M^{[p]} w \ d\mu\right)^{-1/p_i}\\
&=&C\|w^{1/p}\|_{L^p(\log L)^{p-1+\ep},B}\left(\dashint_{\hat{B}} M^{[p]}w \ d\mu\right)^{-1/p},
\eey
where we have used the reverse doubling properties of $\vp$ and $\mu$ (see \eqref{reversedoubling} and \eqref{doubling}).  Choosing $\ep=[p]-p+1>0$ we have
$$\|w^{1/p}\|^p_{L^p(\log L)^{p-1+\ep},B}=\|w\|_{L(\log L)^{p-1+\ep},B}=\|w\|_{L(\log L)^{[p]},B}.$$
However, Lemma 8.5 in \cite{PW01} shows that for any $\delta>0$,
$$\|w\|_{L(\log L)^{[p]},B} \leq C\dashint_{\hat{B}} M^{[p]} w \ d\mu.$$
\end{proof}

\section{Proof of Theorem~\ref{poincareineqTheorem2w}}\label{secc:proofmain}

Let $\Omega$ be an open connected subset of $\rn$ and
$Y=\{Y_k\}_{k=1}^M$ a family of real-valued, infinitely
differentiable vector fields. We identify the ${Y_j}$'s with the first order
differential operators acting on Lipschitz functions  defined on
$\Omega$ by the formula
\[
Y_kf(x)=Y_k(x)\cdot \nabla f(x), \quad k=1,\cdots,M,
\]
and we set $Yf=(Y_1 f, Y_2f,\cdots, Y_Mf)$ and
\[
\abs{Yf(x)}=\left(\sum_{k=1}^M\abs{Y_k f(x)}^2\right)^{1/2}, \quad
x\in \Omega.
\]
\begin{definition} Let $\Omega$ and $Y$ be as above. $Y$ is said to satisfy H\"ormander's condition in
$\Omega$ if there exists an integer $M_0$ such that the family of
commutators of vector fields in $Y$ up to length $M_0,$ i.e., the family if vector fields $Y_1$, $Y_2,\cdots$, $Y_{M_0},$ $[Y_{k_1} Y_{k_2}],\cdots,[Y_{k_1},[Y_{k_2},[\cdots,Y_{k_{M_0}}]]\cdots],$ span  $\rn$ at every point of
$\Omega.$
\end{definition}

Suppose that $Y=\{Y_k\}_{k=1}^M$ satisfies H\"ormander's
condition in $\Omega.$  Let $C_Y$ be the family of absolutely
continuous curves $\zeta:[a,b]\to\Omega,$ $a\le b,$ such that
there exist measurable functions $c_j(t),$ $a\le t\le b,$
$j=1,\cdots, M,$ satisfying $\sum_{j=1}^{M} c_j(t)^2\le 1$ and
$\zeta'(t)=\sum_{j=1}^{M}c_j(t) Y_j(\zeta(t))$ for almost every
$t\in [a,b].$ If $x,\,y\in\Omega$ define
\[\rho(x,y)=\inf\{T>0:\text{ there exists }\zeta \in C_Y \text{ with } \zeta(0)=x \text{ and }\zeta(T)=y\}.\]
The function $\rho$ is in fact a metric in $\Omega$ called the
Carnot-Carath\'eodory metric associated to $Y.$  A detailed study of
the geometry of Carnot-Carath\'eodory spaces can be found in
Nagel-Stein-Wainger~\cite{NSW}.

\begin{remark}\label{rhodoubling} Let $Y$ satisfy
H\"ormander's condition in $\Omega$ with integer $M_0$ and let
$\rho$ be the associated Carnot-Carath\'eodory metric. Nagel-Stein-Wainger~\cite{NSW} proved that for
every compact set $K\subset\Omega$ there exist positive constants  $R_0,$  $C,$ $C_1$ and $C_2$ depending on $K$ such that
\[
\abs{B_\rho(x,2r)}\le C\,\abs{B_\rho(x,r)}, \qquad x\in K, \,r< R_0
\]
and
\[
C_1\abs{x-y}\le \rho(x,y)\le C_2\abs{x-y}^{1/{M_0}}, \qquad x,\,y\in K.
\]
When $\Omega$ is bounded, as noted by Bramanti-Brandolini~\cite[p.534]{BB05}, the last inequality implies that one can actually take $R_0=\infty$.
Moreover, Bramanti-Brandolini~\cite[p.533]{BB05} proved that if $B$ is a $\rho$-ball contained in $K$ and $\Omega$ is bounded then there exists a positive constant $C=C_{\Omega,K, Y}$ such that
\begin{equation}\label{Bdoubling}
\abs{B_\rho(x, r)\cap B}\ge C\abs{B_\rho(x,r)}, \qquad x\in B, \, 0<r<\diam_\rho(B).
\end{equation}
As a consequence, the triple $(B,\rho, \text{Lebesgue measure})$ becomes a space of homogeneous type for all $\rho$-balls $B$ contained in $K$ and with uniform doubling constants that depend on $\Omega,$ $K$ and $Y$. It is often found in the literature the claim that for any compact set $K \subset \Omega$ the triple $(K, \rho,  \text{Lebesgue measure})$ constitutes a space of homogeneous type. However, that is not true in general. Indeed, just in the Euclidean setting, simple examples where $K \subset B(0, 1) \subset \rn$ has an exponentially pronounced cusp will show so. On the other hand, some regularity properties for $\partial K$ will ensure, in general, that $(K, \rho,  \text{Lebesgue measure})$ is a space of homogeneous type. One such property, and one that every $\rho$-ball $B \subset \Omega$ does posses, is expressed by inequality \eqref{Bdoubling}.
\end{remark}

\begin{remark}\label{reversedoubling2}  A reverse doubling property in the context of Carnot-Carath\'eodory spaces was proved by
Franchi-Wheeden~\cite[pp.82-89]{FW99}. More precisely, let $Y$ be a collection of vector fields satisfying H\"ormander's condition in $\Omega$  and let
$\rho$ be the associated Carnot-Carath\'eodory metric. If $\Omega_0\subset\subset\Omega$ is an open bounded set   and $\tau>1,$ then there are positive constants $R_0=R_0(\Omega, \Omega_0,Y)$ and $C=C(\Omega,\Omega_0,Y)$ such that if $B$ is a $\rho$-ball  contained in ${\Omega_0}$ of radius smaller that $R_0$ and $B_1$ and $B_2$ are $\rho$-balls such $B_1 \subset B_2\subset \tau B$ then
\begin{equation}\label{RDorder1}
\frac{|B_2|}{|B_1|} \geq C \,\frac{r(B_2)}{r(B_1)}.
\end{equation}
\end{remark}

\bigskip

Let $\Omega_1\subset \rr^{n_1}$ and $\Omega_2\subset \rr^{n_2}$ be
open connected sets. Given  two families of vector fields $Y^{(1)}$
on $\Omega_1$ and $Y^{(2)}$ on $\Omega_2$ the union of the two sets
is defined as the collection $Y$ of vector fields defined on
$\Omega_1\times\Omega_2\subset \rr^{n_1+n_2}$ obtained by adjoining
zero coordinates appropriately to the vectors in $Y^{(1)}$ and
$Y^{(2)}$ to obtain vectors in $\rr^{n_1+n_2}.$ We note that if
$Y^{(1)}$ and $Y^{(2)}$ satisfy H\"ormander's condition in
$\Omega_1$ and $\Omega_2,$ respectively,  then so does  $Y$ in
$\Omega_1\times\Omega_2$. The following lemma
(Lu-Wheeden~\cite[Lemma 1]{LW98a}) describes the relation between
the Carnot-Carath\'eodory metrics associated to $Y^{(1)},$ $Y^{(2)}$
and $Y.$

\begin{lemma}\label{productdistance}
Let $d_1$ and $d_2$ be Carnot-Carath\'eodory metrics associated with
H\"ormander vector fields $Y^{(1)}$ and $Y^{(2)}$ in $\Omega_1$ and
$\Omega_2,$ respectively. Let $d$ be the metric in $\Omega_1\times\Omega_2$
associated with the union $Y$ of the  two collections.
Then if $x=(x_1,x_2)$ and $y=(y_1,y_2)$ are any two points in
$\Omega_1\times\Omega_2,$
\[
d(x,y)=\max\{d_1(x_1,y_1), d_2(x_2,y_2)\}.
\]
\end{lemma}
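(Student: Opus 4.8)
The plan is to establish the two inequalities $d(x,y)\ge\max\{d_1(x_1,y_1),d_2(x_2,y_2)\}$ and $d(x,y)\le\max\{d_1(x_1,y_1),d_2(x_2,y_2)\}$ separately. The mechanism behind both is the block structure of the union $Y=Y^{(1)}\cup Y^{(2)}$: after the zero coordinates are adjoined, every field coming from $Y^{(1)}$ has the form $(\,\cdot\,,0)$ and every field coming from $Y^{(2)}$ has the form $(0,\,\cdot\,)$, so along any admissible curve $\zeta=(\zeta_1,\zeta_2)$ the two components evolve under the two separate families, with their controls occupying disjoint blocks of coordinates.

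For the lower bound I would take any $\zeta\in C_Y$ with $\zeta(0)=x$, $\zeta(T)=y$ and split $\zeta=(\zeta_1,\zeta_2)$. By the block structure, $\zeta_1$ is an absolutely continuous curve in $\Omega_1$ driven by the $Y^{(1)}$-part of the controls (whose squared sum is at most that of the full control, hence at most $1$), so $\zeta_1\in C_{Y^{(1)}}$ joins $x_1$ to $y_1$ in time $T$; likewise $\zeta_2\in C_{Y^{(2)}}$ joins $x_2$ to $y_2$ in time $T$. Therefore $T\ge d_1(x_1,y_1)$ and $T\ge d_2(x_2,y_2)$, and taking the infimum over admissible $T$ yields $d(x,y)\ge\max\{d_1(x_1,y_1),d_2(x_2,y_2)\}$.

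For the upper bound, assume without loss of generality $d_1(x_1,y_1)\ge d_2(x_2,y_2)$. Given $\ve>0$, choose $\gamma_1\in C_{Y^{(1)}}$ joining $x_1$ to $y_1$ on an interval $[0,T_1]$ with $d_1(x_1,y_1)\le T_1<d_1(x_1,y_1)+\ve$ and $T_1>d_2(x_2,y_2)$ (possible since a sub-unit curve may always be slowed down, so every length exceeding the infimum is attainable), and choose $\gamma_2\in C_{Y^{(2)}}$ joining $x_2$ to $y_2$ on some $[0,S]$ with $S<T_1$. Extend $\gamma_2$ to $[0,T_1]$ by keeping it at $y_2$ on $[S,T_1]$; a constant curve is admissible with zero controls, so the extension still lies in $C_{Y^{(2)}}$. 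Then $\zeta(t):=(\gamma_1(t),\gamma_2(t))$ joins $x$ to $y$ in time $T_1$, and it belongs to $C_Y$ because its velocity is a combination of $Y^{(1)}$- and $Y^{(2)}$-fields whose controls act on complementary coordinate blocks, so the sub-unit constraint for $Y$ is met. Hence $d(x,y)\le T_1<\max\{d_1(x_1,y_1),d_2(x_2,y_2)\}+\ve$, and letting $\ve\to0$ finishes the argument; the degenerate cases $x_1=y_1$ or $x_2=y_2$ are covered by taking the corresponding component curve constant.

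The step I expect to require the most care is the verification in the upper bound that the ``parallel'' curve $\zeta=(\gamma_1,\gamma_2)$ genuinely belongs to $C_Y$, i.e.\ that running the two near-optimal curves simultaneously (after padding the shorter one with a rest segment) is compatible with the admissibility constraint for the union $Y$; this is exactly where the hypothesis that $Y$ is formed by adjoining zero coordinates is used, so that the control budgets for the two factors sit in disjoint coordinate blocks and do not compete. Once that point is granted, the rest is routine bookkeeping with $\ve$, with reparametrizations of sub-unit curves, and with the degenerate cases noted above.
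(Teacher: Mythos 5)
Your lower-bound argument is correct: the projection of a $C_Y$-admissible curve onto each factor is $C_{Y^{(i)}}$-admissible with the same time budget, so $d\ge\max\{d_1,d_2\}$.

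The upper bound, however, breaks precisely at the step you flagged. Under the definition of $C_Y$ quoted in Section~5 of the paper --- $\zeta'(t)=\sum_{j=1}^{M}c_j(t)Y_j(\zeta(t))$ with $\sum_{j=1}^{M}c_j(t)^2\le 1$ --- the controls for the two blocks share a single $\ell^2$ budget; they do \emph{not} ``sit in disjoint coordinate blocks and not compete.'' If $\gamma_1$ uses controls $c^{(1)}$ with $\sum_j (c^{(1)}_j)^2\le 1$ and $\gamma_2$ uses $c^{(2)}$ with $\sum_j (c^{(2)}_j)^2\le 1$, then the parallel curve $\zeta=(\gamma_1,\gamma_2)$ has total control $\sum_j(c^{(1)}_j)^2+\sum_j(c^{(2)}_j)^2\le 2$, not $\le 1$, so $\zeta\notin C_Y$. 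In fact the asserted equality is false under this $\ell^2$ normalization: take $\Omega_1=\Omega_2=\mathbf{R}$, $Y^{(1)}=\{\partial_{x_1}\}$, $Y^{(2)}=\{\partial_{x_2}\}$. Then $\max\{d_1(0,1),d_2(0,1)\}=1$, while the sub-unit vectors for the union $\{\partial_{x_1},\partial_{x_2}\}$ are exactly the Euclidean unit ball, so $d((0,0),(1,1))=\sqrt{2}$. Slowing both components by a factor $\sqrt{2}$ repairs admissibility and gives $d\le\sqrt{2}\,\max\{d_1,d_2\}$, which together with your lower bound is the two-sided comparability the rest of the paper actually needs. The exact equality of Lu--Wheeden (\cite[Lemma 1]{LW98a}, quoted but not reproved in the present paper) is a statement about the metric built from the $\ell^\infty$ sub-unit constraint $|c_j|\le 1$ for each $j$; under that convention the budgets really are decoupled and your ``parallel curve'' construction, together with the slowdown/padding device, goes through verbatim. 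The mismatch sits in the paper's Section~5 definition of $C_Y$, not in your geometric idea, but as written your proof of the crucial admissibility claim is incorrect.
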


\begin{remark}\label{productball} Let $\vec{x}=(x_1,x_2)$ with $x_1\in\Omega_1$ and $x_2\in\Omega_2$ and $r>0.$
Lemma~\ref{productdistance} implies that $B_d(\vec{x},
r)=B_{d_1}(x_1,r)\times B_{d_2}(x_2,r).$
\end{remark}

\bigskip

We now deduce a multilinear  representation formula in
the setting of Carnot-Carath\'eodory spaces.
Theorem~\ref{poincareineqTheorem2w}
will follow from
Corollary~\ref{multrepformula} below and the weighted boundedness
properties of the multilinear fractional operators in
Corollary \ref{potopbound2w}.

\begin{theor}[Representation formula]\label{repformulatheo}
 Suppose Y is a collection of  vector fields on a connected bounded open set $\Omega\subset \rn$ satisfying
  H\"ormander's condition, $\rho$ is the associated Carnot-Carath\'eodory metric and $\Omega_0\subset\subset\Omega$ is an open  set. There exist positive constants
  $r_0=r_0(\Omega,\Omega_0,Y)$ and  $C_{\Omega,\Omega_0, Y}$ such that  for all $\rho$-ball $B\subset\overline{\Omega_0}$ with radius less than $r_0$ and for all $f\in C^1(\overline{B}),$
\begin{equation}\label{repformula}
\abs{f(x)-f_{B}}\le C_{\Omega,\Omega_0,Y}\,
\int_{B}\abs{Yf(y)}\frac{\rho(x,y)}{\abs{B_{\rho}(x,\rho(x,y))}}\,dy,
\qquad  x\in B.
\end{equation}
\end{theor}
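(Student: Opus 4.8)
The plan is to mimic, within the Carnot--Carath\'eodory structure, the classical route to the pointwise bound $\abs{f(x)-f_B}\lesssim I_{B,1}(\abs{\nabla f})(x)$: integrate $\abs{Yf}$ along subunit curves issuing from $x$, and then convert the resulting double integral over $B\times[0,\infty)$ into a single integral over $B$ by a change of variables playing the role of Euclidean polar coordinates. Fix a $\rho$-ball $B=B_\rho(x_0,R)$ with $R<r_0$, a point $x\in B$, and $f\in C^1(\overline B)$.

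\emph{Step 1 (integration along subunit curves).} First I would select, for each $y\in B$, an absolutely continuous subunit curve $\gamma_{x,y}\colon[0,T_{x,y}]\to\overline B$ with $\gamma_{x,y}(0)=x$, $\gamma_{x,y}(T_{x,y})=y$ and $T_{x,y}\le c\,\rho(x,y)$, chosen so that it stays inside $\overline B$; this is possible once $r_0$ is small, since small $\rho$-balls enjoy a quantitative connectivity property — points of $B$ can be joined inside $\overline B$ by subunit curves of length comparable to their $\rho$-distance — with constants controlled by $\Omega,\Omega_0,Y$, a consequence of the Nagel--Stein--Wainger geometry \cite{NSW} together with \eqref{Bdoubling}. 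Writing $\gamma_{x,y}'(t)=\sum_j c_j(t)\,Y_j(\gamma_{x,y}(t))$ with $\sum_j c_j(t)^2\le1$ and using $f\in C^1$, the fundamental theorem of calculus and Cauchy--Schwarz give
\[
\abs{f(x)-f(y)}=\abs{\int_0^{T_{x,y}}\sum_j c_j(t)\,Y_jf(\gamma_{x,y}(t))\,dt}\le\int_0^{T_{x,y}}\abs{Yf(\gamma_{x,y}(t))}\,dt ,
\]
so averaging in $y$ over $B$ yields $\abs{f(x)-f_B}\le\tfrac1{\abs B}\int_B\int_0^{T_{x,y}}\abs{Yf(\gamma_{x,y}(t))}\,dt\,dy$.

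\emph{Step 2 (dyadic reorganization).} Next I would split $B$ into the $\rho$-annuli $A_k=\{z\in B:2^{-k-1}\diam_\rho(B)\le\rho(x,z)<2^{-k}\diam_\rho(B)\}$, $k\ge0$. Since $t\mapsto\rho(x,\gamma_{x,y}(t))$ is $1$-Lipschitz and $T_{x,y}\approx\rho(x,y)$, a curve ending at $y\in A_k$ never enters $A_j$ for $j<k$ and, for a suitably chosen near-minimizer, spends time comparable to $2^{-j}\diam_\rho(B)$ in each $A_j$ with $j\ge k$. Splitting the $t$-integral accordingly, the decisive point is a ``no concentration'' estimate: pushing the $(y,t)$-integral forward under $(y,t)\mapsto z=\gamma_{x,y}(t)\in A_j$ should produce an essentially uniform density on $A_j$, giving
\[
\int_{A_k}\int_{\{t:\gamma_{x,y}(t)\in A_j\}}\abs{Yf(\gamma_{x,y}(t))}\,dt\,dy\ \lesssim\ \frac{\abs{A_k}\,2^{-j}\diam_\rho(B)}{\abs{A_j}}\int_{A_j}\abs{Yf(z)}\,dz .
\]
Summing over $k\le j$ and using the reverse doubling property \eqref{RDorder1} of Remark~\ref{reversedoubling2} — which forces $\sum_{k\le j}\abs{A_k}\approx\abs B$, $\abs{A_j}\approx\abs{B_\rho(x,\rho(x,z))}$ and $2^{-j}\diam_\rho(B)\approx\rho(x,z)$ for $z\in A_j$ — the double sum collapses to
\[
\abs{f(x)-f_B}\ \lesssim\ \sum_{j\ge0}\frac{2^{-j}\diam_\rho(B)}{\abs{A_j}}\int_{A_j}\abs{Yf}\ \approx\ \int_B\abs{Yf(z)}\,\frac{\rho(x,z)}{\abs{B_\rho(x,\rho(x,z))}}\,dz ,
\]
which is \eqref{repformula}; the constants depend only on $\Omega,\Omega_0,Y$ through \eqref{Bdoubling} and \eqref{RDorder1}, and $r_0$ is taken small enough for these to apply.

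The hard part will be the ``no concentration'' step: controlling the Jacobian of $(y,t)\mapsto\gamma_{x,y}(t)$, i.e.\ showing that the subunit near-geodesics emanating from $x$ do not cluster, up to structural constants. This is the exact analogue of the Jacobian of Euclidean polar coordinates and rests on the ball-box structure of Carnot--Carath\'eodory balls \cite{NSW}. In practice I would either invoke an already available subelliptic representation formula (Lu \cite{Lu92}, Capogna--Danielli--Garofalo \cite{CDG94,CDG97}, Franchi--Lu--Wheeden \cite{FLW95}) restricted to the small ball $B$ and merely track the dependence of constants via \eqref{Bdoubling} and \eqref{RDorder1}, or replace the explicit family $\{\gamma_{x,y}\}$ by a mollification/averaging argument producing the same kernel $\rho(x,\cdot)/\abs{B_\rho(x,\rho(x,\cdot))}$. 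Once \eqref{repformula} is in hand, its $m$-fold product analogue feeds directly into Corollary~\ref{potopbound2w} to yield Theorem~\ref{poincareineqTheorem2w}.
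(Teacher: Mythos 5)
Your route is genuinely different from the paper's, and the difference is exactly where the gap lies. The paper obtains Theorem~\ref{repformulatheo} by verifying the hypotheses (H1)--(H3) of Theorem 1 in Lu--Wheeden~\cite{LW98b} on each small ball $B\subset\overline{\Omega_0}$ with constants uniform in $B$: (H1) is the $L^1$--$L^1$ Poincar\'e inequality on sub-balls, supplied by Jerison~\cite{J86}; (H2) is the reverse doubling~\eqref{RDorder1} from Franchi--Wheeden~\cite{FW99}; and (H3) is the segment property, automatic for Carnot--Carath\'eodory metrics. The engine inside Lu--Wheeden's theorem is a Boman-type chaining argument: telescope $f(x)-f_B$ over a dyadic chain $B\supset B_1\supset B_2\supset\cdots\to\{x\}$, control each step $\abs{f_{B_k}-f_{B_{k+1}}}$ by the ball Poincar\'e inequality, and sum. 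No Jacobian of a family of curves is ever estimated.

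Your proposal instead tries to push a ``polar coordinates'' change of variables through the sub-Riemannian structure. Step~1 is fine. But the ``no concentration'' estimate in Step~2, which you rightly flag as the decisive point, is a genuine gap and not a technicality: it is a two-sided control on the Jacobian of $(y,t)\mapsto\gamma_{x,y}(t)$, and in Carnot--Carath\'eodory geometry near-minimizers emanating from a fixed point can focus (already in the Heisenberg group the exponential map develops conjugate points and a nontrivial cut locus along the vertical line), so a uniform bound on the pushforward density over an annulus $A_j$ is not available off-the-shelf and is not how representation formulas of this kind are derived in the literature. There is also a secondary gap: you need each near-minimizer to spend time $\approx 2^{-j}\diam_\rho(B)$ in $A_j$ for every $j\ge k$, but the $1$-Lipschitz property of $t\mapsto\rho(x,\gamma_{x,y}(t))$ only gives the lower bound for the crossing time; a near-minimizer can oscillate and re-enter $A_j$, and bounding the time spent there from above needs an additional argument. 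Your fallback --- invoke an existing subelliptic representation formula on $B$ and track the constants through \eqref{Bdoubling} and \eqref{RDorder1} --- is precisely the correct strategy and is what the paper does (with \cite{LW98b} as the source); it should be promoted from a footnote to the main argument, since the direct polar-coordinates computation cannot be carried through at this level of generality.
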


\begin{proof}
This is essentially a consequence of Theorem 1 in Lu-Wheeden~\cite{LW98b}, we only need to check that the hypotheses (H1)-(H3) in that theorem hold true for any $\rho$-ball $B\subset\overline{\Omega_0}$ with radius sufficiently small and with constants depending only on $\Omega$ and  $Y$.

Hypothesis (H1): In our context (H1) can be stated as the existence of positive constants $a_1 \geq 1$  and $C_1 > 0$ such that for all $\rho$-balls $\tilde{B}$ with $a_1 \tilde{B} \subset B$
\begin{equation}\label{PoinJerison}
\int_{\tilde{B}} |f - f_{\tilde{B}}| \, dx \leq C_1 \,r(\tilde{B}) \int_{a_1 \tilde{B}} |Y f| \, dx,
\end{equation}
where, again, $r(\tilde{B})$ denotes the radius of $\tilde{B}.$ Inequality \eqref{PoinJerison} holds true as a consequence of Jerison's Poincar\'e estimate in \cite{J86} if the radius of $B$ is sufficiently small. More precisely, for every compact set
$K\subset \Omega$ there are constants $C_{K,Y}$ and $r_{K,Y}$ such that for $u\in \text{Lip}(\tilde{B})$
\[
\int_{\tilde{B}} |f - f_{\tilde{B}}| \, dx \leq C_{K,Y}\, r(\tilde{B}) \int_{2 \tilde{B}} |Y f| \, dx,
\]
whenever $\tilde{B}$ is a $\rho$-ball centered at $K$ and radius $r(\tilde{B})<r_{K,Y}$ (see Haj{\l}asz-Koskela~\cite[Theorem 11.20]{HK00}).
  Then (H1) follows with  $a_1=2$ and $C_1=C_{\overline{\Omega_0},Y}$ if we choose $K=\overline{\Omega_0}$ and $B\subset \overline{\Omega_0}$ with radius smaller than $r_{\overline{\Omega_0},Y}.$  In fact, Jerison proved the inequality with the $L^2$ norms on both sides, but the same arguments work with the $L^1$ norm. He also proved that one can take $a_1=1.$

 Hypothesis (H2): (H2) is the reverse doubling condition \eqref{RDorder1}, which, as mentioned before, was proved in
Franchi-Wheeden~\cite[pp.82-89]{FW99}.

Hypothesis (H3): (H3) is the `segment property' for every ball $\tilde{B}\subset B,$ which holds for any metric induced by a collection of Carnot-Carath\'eodory vector fields  (see Franchi-Wheeden~\cite[p.66 and Example 3 (p.82)]{FW99} and note that the notion of `segment property' in Franchi-Wheeden\cite{FW99} easily implies the notion of `segment' property in Lu-Wheeden ~\cite[p.580]{LW98b}).

We have then checked that hypotheses (H1)-(H3) hold  with constants
depending only on $\Omega_0$ and $Y$  and for any $\rho$-ball
$B\subset\overline{\Omega_0}$ with radius smaller than $r_0$
defined as the minimum of the upper bounds obtained for the radii in
(H1) and (H2). Therefore, the representation formula
\eqref{repformula} holds true by Theorem 1 in
Lu-Wheeden~\cite{LW98b}.

It must be noticed that \eqref{repformula} holds true for every $x
\in B$ and not only for a.e. $x \in B.$ The proof of Theorem 1
in Lu-Wheeden~\cite{LW98b} depends on the representation formula of
Lemma 3 in   Lu-Wheeden~\cite{LW98b}. Lemma 3, in turn, is based on
Theorem 1 in Franchi-Wheeden~\cite{FW99} and a close examination of
its proof shows that, in our case, it actually holds for every $x
\in B$ since we are assuming that $f$ is continuous and therefore
every point in $B$ is a Lebesgue point of $f$. Another
explanation for the fact that \eqref{repformula} holds for every
$x\in B$ would be that from Lu-Wheeden~\cite{LW98b}, the reasoning
above gives that \eqref{repformula} holds a.e. in $B.$ This and
the fact that both sides of the inequality are continuous in $x$
give that \eqref{repformula} holds for every $x\in B.$

\end{proof}

\begin{corollary}[Multilinear representation formula]\label{multrepformula}
 Suppose $Y$ is a collection of vectors fields on an open bounded connected set $\Omega\subset \rn$ satisfying
H\"omander's condition and $\tilde{Y}$ is the vector field defined on $\Omega^m$ that is the union of $m$ copies of $Y.$ Denote by $\rho$
the  Carnot-Carath\'eodory metric in $\Omega$ associated to $Y$ and by $\tilde{\rho}$
 the Carnot-Carath\'eodory metric in $\Omega^{m}$ associated to $\tilde{Y}.$  Let $\Omega_0\subset\subset\Omega$ be an open  set. There exist positive constants
  $r_0=r_0(\Omega,\Omega_0,Y)$ and  $C_{\Omega,\Omega_0, Y}$ such that  for all $\rho$-ball $B\subset{\Omega_0}$ with radius less than $r_0$ and for all $f_k\in C^1(\overline{B}),$ $k=1,\cdots, m,$
 \begin{equation}\label{repformula2}
\abs{\prod_{k=1}^m f_k(x)- \prod_{k=1}^m {f_k}_B}\le C_{\Omega,\Omega_0,Y}\,
\sum_{k=1}^m \mathcal{I}_{B,1}(f_1\chi_B, \cdots, (Yf_k)\chi_B, \cdots,
f_m\chi_B)(x),
\end{equation}
for all $x\in B.$
 \end{corollary}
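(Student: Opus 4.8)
The plan is to derive the multilinear representation formula \eqref{repformula2} from the linear one in Theorem \ref{repformulatheo} via a standard telescoping (Abel summation) identity on the product, applied pointwise and then combined with the single-variable formula \eqref{repformula}. The main algebraic input is the identity
\[
\prod_{k=1}^m a_k - \prod_{k=1}^m b_k = \sum_{k=1}^m \left(\prod_{i<k} b_i\right)(a_k-b_k)\left(\prod_{i>k} a_i\right),
\]
which I would apply with $a_k = f_k(x)$ and $b_k = {f_k}_B$. This gives, for $x\in B$,
\[
\abs{\prod_{k=1}^m f_k(x) - \prod_{k=1}^m {f_k}_B} \le \sum_{k=1}^m \left(\prod_{i<k}\abs{{f_i}_B}\right)\abs{f_k(x) - {f_k}_B}\left(\prod_{i>k}\abs{f_i(x)}\right).
\]

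First I would estimate the constant factors $\abs{{f_i}_B}$ by the average of $\abs{f_i}$ over $B$, i.e. $\abs{{f_i}_B} \le \dashint_B \abs{f_i}\,dy = \frac{1}{|B|}\int_B \abs{f_i}$; alternatively, and more useful for matching the target, I would want these to become the ``$f_i\chi_B$'' slots of $\mathcal{I}_{B,1}$, so I would instead absorb them into the potential operator rather than bounding them crudely. Concretely, for the $k$-th term I would first apply the linear representation formula \eqref{repformulatheo} to $f_k$ to get
\[
\abs{f_k(x) - {f_k}_B} \le C_{\Omega,\Omega_0,Y}\int_B \abs{Yf_k(y_k)}\frac{\rho(x,y_k)}{\abs{B_\rho(x,\rho(x,y_k))}}\,dy_k, \qquad x\in B,
\]
valid for all $x\in B$ (including, crucially, every point, not just a.e.\ point, by the continuity argument already established in the proof of Theorem \ref{repformulatheo}). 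Then, since $\rho(x,y_k) \le \rho(x,\vec{y}) := \rho(x,y_1)+\cdots+\rho(x,y_m)$ and the balls $B_\rho(x,\cdot)$ are increasing, the doubling property \eqref{doublingMu} (equivalently, Remark \ref{rhodoubling}) gives
\[
\frac{\rho(x,y_k)}{\abs{B_\rho(x,\rho(x,y_k))}} \le C\,\frac{\rho(x,\vec{y})}{\abs{B_\rho(x,\rho(x,\vec{y}))}},
\]
where $\vec y$ will end up being the full vector of integration variables. I would then multiply this single-variable bound by the remaining factors $\abs{f_i(x)}$ (for $i>k$) and by the averaged quantities for $i<k$, and recognize the result — after introducing dummy integration variables $y_i$ for the factor slots $i\ne k$ and inserting the cutoffs $\chi_B$ — as exactly $\mathcal{I}_{B,1}(f_1\chi_B,\ldots,(Yf_k)\chi_B,\ldots,f_m\chi_B)(x)$, using the definition \eqref{potopdefinition}–\eqref{potkernel} of the multilinear fractional integral (with $X=B$, $\mu=$ Lebesgue measure, $\alpha=1$) together with the fact that $(B,\rho,\text{Leb})$ is a space of homogeneous type by Remark \ref{rhodoubling}, inequality \eqref{Bdoubling}. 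The factors $\abs{f_i(x)}$ with $i>k$ depend on $x$ rather than on an integration variable, so strictly I would first replace $\abs{f_i(x)}$ by $\dashint_B\abs{f_i}\,dy_i$ only after justifying it — but in fact the cleanest route is to bound $\prod_{i>k}\abs{f_i(x)}$ trivially is \emph{not} possible since the left side must not depend on such a substitution; instead I note that $\mathcal{I}_{B,1}$ already has all $m$ slots, so I should split the telescoping so that the ``$a_i$'' for $i>k$ are also replaced by ${f_i}_B$-type averages: the correct move is to use the fully symmetric bound $\abs{\prod f_k(x) - \prod {f_k}_B} \le \sum_k \abs{f_k(x)-{f_k}_B}\prod_{i\ne k}\max(\abs{f_i(x)},\abs{{f_i}_B})$ and then observe $\max(\abs{f_i(x)},\abs{{f_i}_B}) \le \abs{f_i(x)} + \dashint_B\abs{f_i}$, both of which are dominated, after integrating the kernel over the $i$-th variable, by the $\chi_B$-slot of $\mathcal{I}_{B,1}$ since the kernel $K_1$ is essentially an averaging kernel in each variable.

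The step I expect to be the main obstacle is precisely this bookkeeping: matching the product of a genuinely one-variable potential (coming from \eqref{repformula} applied to $f_k$) against the $m$-variable kernel $K_1(x,\vec y) = \rho(x,\vec y)/\abs{B_\rho(x,\rho(x,\vec y))}^m$ of $\mathcal{I}_{B,1}$, and making sure the factor-function slots $f_i\chi_B$ ($i\ne k$) absorb the $\abs{f_i(x)}$ and $\abs{{f_i}_B}$ terms correctly. The resolution is that $K_1$ restricted to $B^{m+1}$ behaves, in each of its last $m$ arguments, like a normalized averaging kernel: for $h\ge 0$ supported in $B$, $\int_B h(y_i)\,\frac{\rho(x,\vec y)}{\abs{B_\rho(x,\rho(x,\vec y))}^m}\,dy_i \gtrsim (\dashint_B h)\cdot \frac{\rho(x,\vec y')}{\abs{B_\rho(x,\rho(x,\vec y'))}^{m-1}}$ for a comparable configuration, by the doubling and reverse-doubling properties \eqref{doublingMu}, \eqref{reversedoubling} (the latter from Remark \ref{reversedoubling2}). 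Iterating this over the $m-1$ non-distinguished indices converts the single-variable bound on $\abs{f_k(x)-{f_k}_B}\prod_{i\ne k}(\cdots)$ into a lower bound by $c\,\mathcal{I}_{B,1}(f_1\chi_B,\ldots,(Yf_k)\chi_B,\ldots,f_m\chi_B)(x)$, which is what we want (we want an upper bound on the oscillation by the potential, so we need the potential to dominate, i.e.\ the averaging-kernel inequality in the $\gtrsim$ direction, which is exactly what doubling gives). Collecting the $m$ terms and setting $r_0$ and the constant to those furnished by Theorem \ref{repformulatheo} completes the proof.
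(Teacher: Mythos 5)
Your approach is genuinely different from the paper's, but it has a gap that I do not think can be repaired along the lines you sketch. The paper does not telescope the product at all; it applies the \emph{linear} representation formula of Theorem~\ref{repformulatheo} to the single function $f(\vec{y})=\prod_k f_k(y_k)$ on the product Carnot--Carath\'eodory space $(\Omega^m,\tilde\rho,\tilde Y)$, uses Lemma~\ref{productdistance} to see that $B^m$ is a $\tilde\rho$-ball, restricts to the diagonal $\vec x=(x,\dots,x)$, and invokes the Leibniz rule $|\tilde Y f(\vec y)|\le\sum_k|Yf_k(y_k)|\prod_{i\ne k}|f_i(y_i)|$. The decisive advantage of this route is that \emph{every} factor $f_i$ ends up evaluated at the integration variable $y_i$, never at $x$, so the resulting integrand is exactly the one appearing in $\mathcal I_{B,1}(f_1\chi_B,\dots,(Yf_k)\chi_B,\dots,f_m\chi_B)$, and it only remains to compare the kernel on the product space with $K_1$ via the doubling property.

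Your telescoping, by contrast, produces terms such as
\[
\Bigl(\prod_{i<k}|{f_i}_B|\Bigr)\,|f_k(x)-{f_k}_B|\,\Bigl(\prod_{i>k}|f_i(x)|\Bigr),
\]
and you then have to absorb the pointwise values $|f_i(x)|$ (for $i>k$) into the $i$-th slot $f_i\chi_B$ of the potential. You correctly flag this as the danger point, but the ``averaging-kernel'' inequality you invoke to resolve it,
\[
|f_i(x)|\cdot K_1'(x,\vec y\,')\ \lesssim\ \int_B|f_i(y_i)|\,K_1(x,\vec y)\,dy_i,
\]
is false: $|f_i(x)|$ carries no information about $\int_B|f_i|$, and the kernel does not concentrate enough mass at $x$ to recover it. A concrete counterexample, in the Euclidean case $n=1$, $m=2$, $B=(-1,1)$, $x=0$: take $f_1(y)=1+y^2$ and $f_2(y)=N\max(0,1-N|y|)$ with $N$ large. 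Then the $k=1$ telescoping term is $|f_1(0)-{f_1}_B|\,|f_2(0)|=\tfrac{1}{3}N$, whereas a direct computation gives $\mathcal I_{B,1}(Yf_1\chi_B,f_2\chi_B)(0)\approx 1$ uniformly in $N$ (the spike of $f_2$ contributes only its integral $\approx 1$ to the potential, while its pointwise value $N$ enters the telescoped term). So the $k=1$ telescoping term is \emph{not} dominated by the corresponding $\mathcal I_{B,1}$ term, and the per-index estimate you propose cannot hold. (The statement \eqref{repformula2} itself survives in this example because the $k=2$ term $\mathcal I_{B,1}(f_1\chi_B,Yf_2\chi_B)(0)\approx N\log N$ is large, but your argument does not access that compensation.) To prove the Corollary you should instead pass to the product space as the paper does, so that $f_i$ never gets evaluated at $x$.
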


 \begin{proof}
Let $r_0$ be given  by Theorem~\ref{repformulatheo} when applied to $\Omega^m,$ $\Omega_0^m$ and $\tilde{Y}.$ If $B$ is a $\rho$-ball of radius less than $r_0$ contained in $\Omega_0,$  by Lemma~\ref{productdistance}, $B^{m}$ is a
$\tilde{\rho}$-ball contained in $\Omega_0^{m}$ of radius less than $r_0.$
Theorem~\ref{repformulatheo} with $f(\vec{y})=\prod_{k=1}^m f_k(y_k)$ gives
\begin{align*}
\abs{f(\vec{x})-\frac{1}{\abs{B^m}}\int_{B^m}f(\vec{y})\,d\vec{y}}&\le
C_{\Omega,\Omega_0,Y}\, \int_{B^m}\abs{\tilde{Y}f(\vec{y})}
\frac{\tilde{\rho}(\vec{x},\vec{y})}{\abs{B_{\tilde{\rho}}(\vec{x},
\tilde{\rho}(\vec{x}, \vec{y}))}}\,d\vec{y}\\
&=C_{\Omega,\Omega_0,Y}\, \int_{B^m}\abs{\tilde{Y}f(\vec{y})}
\frac{\tilde{\rho}(\vec{x},\vec{y})}{\prod_{k=1}^m\abs{B_\rho(x_k,
\tilde{\rho}(\vec{x}, \vec{y}))}}\,d\vec{y},
\end{align*}
for all $\vec{x}=(x_1,\cdots, x_m)\in B^m.$
Taking   $x_1=\cdots=x_m=x\in B$  we obtain
\begin{align*}
&\abs{\prod_{k=1}^m f_k(x)-\prod_{k=1}^m {f_k}_B}\le C_{\Omega,\Omega_0,Y}\,
\int_{B^m}\abs{\tilde{Y}f(\vec{y})}
\frac{\tilde{\rho}(\vec{x},\vec{y})}{\abs{B_\rho(x,
\tilde{\rho}(\vec{x},
\vec{y}))}^m}\,d\vec{y}\\
&\lesssim \int_{B^m}\sum_{k=1}^m \abs{ f_1(y_1)\chi_B(y_1)
\cdots (Yf_k(y_k))\chi_B(y_k)\cdots f_m(y_m)\chi_B(y_m)}
\frac{\tilde{\rho}(\vec{x},\vec{y}) }{\abs{B_\rho(x,
\tilde{\rho}(\vec{x},
\vec{y}))}^m}\,d\vec{y},
\end{align*}
where the constants depend only on $\Omega,$ $\Omega_0$ and $Y$. Since $\tilde{\rho}(\vec{x},\vec{y})\sim \rho(x,\vec{y}),$  it follows from Remark~\ref{rhodoubling}  that $\abs{B_\rho(x,\tilde{\rho}(\vec{x}, \vec{y}))}\sim\abs{B_\rho(x, \rho(x,\vec{y}))}$ uniformly for $x\in\Omega_0,$ and therefore we obtain  \eqref{repformula2}.

\end{proof}

\begin{proof}[Proof of Theorem~\ref{poincareineqTheorem2w}] Let $r_0$ be the minimum of the radii given by Remark~\ref{reversedoubling2}
 and by Corollary~\ref{multrepformula} when applied to $\Omega,$ $\Omega_0$ and $Y$ as in the hypotheses of Theorem~\ref{poincareineqTheorem2w}.
  Let $B$ be a $\rho$-ball contained in $\Omega_0$ of radius smaller than  $r_0.$
The Poincar\'e inequality $\eqref{poincareineq}$ will follow from the multilinear representation formula \eqref{repformula2} once we have checked that $(B,\rho,\text{Lebesgue measure})$ satisfies the hypotheses of Corollary~\ref{potopbound2w} with uniform constants depending only on $\Omega,$ $\Omega_0$ and $Y$.

As noted in Remark~\ref{rhodoubling}, $(B,\rho, \text{Lebesgue measure})$ is a space of homogeneous type with doubling constant uniform in $B$ depending on $\Omega,$ $\Omega_0$ and $Y.$

The reverse doubling condition \eqref{reversedoubling} in this context  means that there are positive  constants $c$ and $\delta,$ depending only on $\Omega,$ $\Omega_0$ and $Y,$   such that
\[
\frac{\abs{B_\rho(x_1,r_1)\cap B}}{\abs{B_\rho(x_2,r_2)\cap B}}\ge c \left(\frac{r_1}{r_2}\right)^\delta,
\]
whenever $B_\rho(x_2,r_2)\subset B_\rho(x_1,r_1),$ $x_1,x_2\in B,$ and $0<r_1,\,r_2\le 2\,\diam_\rho(B)$.
By \eqref{Bdoubling}, this reduces to prove that there are positive constants $c$ and $\delta,$ depending only on $\Omega,$ $\Omega_0$ and $Y,$ such that
\begin{equation}
\frac{\abs{B_\rho(x_1,r_1)}}{\abs{B_\rho(x_2,r_2)}}\ge c \left(\frac{r_1}{r_2}\right)^\delta,
\end{equation}
whenever $B_\rho(x_2,r_2)\subset B_\rho(x_1,r_1),$ $x_1,x_2\in B,$ and $0<r_1,\,r_2\le 2\, \diam_\rho(B)$.
This is the result proved by Franchi-Wheeden~\cite{FW99}  with $\delta=1$ and $\tau=5$ as indicated in Remark~\ref{reversedoubling2}.

As explained in Remark~\ref{mainassumpremark},  \eqref{mainassump} is satisfied with $\ep=\alpha$ and  $C_2$ depends only on structural constants independent of $B$ if $C_1=2\kappa a_1 A m$ as given in the proof of Theorem~\ref{general2w} at the moment of applying the result of Lemma~\ref{packing}.

The growth condition \eqref{growth} for the kernel \eqref{kerneltildam} in this context means that for
every positive constant $C_1$
there exists a positive constant $C_2=C_2(\Omega,\Omega_0,Y)$ such that for all $\vec{x},\,\vec{y},\,\vec{z}\in B^m,$
\begin{eqnarray*}
\frac{{\rho}(\vec{x}, \vec{y})}{\prod_{k=1}^m \abs{B_{{\rho}}(x_k, {\rho}(\vec{x},\vec{y}))\cap B}}&\le& C_2\, \frac{{\rho}(\vec{z}, \vec{y})}{\prod_{k=1}^m\abs{B_{\rho}(z_k, {\rho}(\vec{z},\vec{y}))\cap B}}, \quad {\rho}(\vec{z}, \vec{y})\le C_1 \,{\rho}(\vec{x},\vec{y})\\
\frac{{\rho}(\vec{x}, \vec{y})}{\prod_{k=1}^m\abs{B_{{\rho}}(x_k, {\rho}(\vec{x},\vec{y}))\cap B}}&\le& C_2\, \frac{{\rho}(\vec{y}, \vec{z})}{\prod_{k=1}^m\abs{B_{{\rho}}(y_k,{\rho}(\vec{y},\vec{z}))\cap B}}, \quad {\rho}(\vec{y}, \vec{z})\le C_1 \,{\rho}(\vec{x},\vec{y}).
\end{eqnarray*}
Consider the Carnot-Carath\'eodory space given by
$\Omega^{m}$ and the H\"ormander vector field $\tilde{Y}$ made of $m$ copies of $Y$ and let $\tilde{\rho}$ be the associated Carnot-Carath\'eodory metric.
 Recalling that $(B,\rho, \text{Lebesgue measure})$ is a space of homogeneous type with uniform constants, using  Lemma~\ref{productdistance} and \eqref{Bdoubling},  the growth condition stated above reduces to have that for every positive constant $C_1$
there exists a positive constant $C_2=C_2(\Omega,\Omega_0,Y)$ such that for all $\vec{x},\,\vec{y},\,\vec{z}\in B^m,$
\begin{eqnarray*}
\frac{\tilde{\rho}(\vec{x}, \vec{y})}{\abs{B_{\tilde{\rho}}(\vec{x},\tilde{\rho}(\vec{x},\vec{y}))}}&\le& C_2\, \frac{\tilde{\rho}(\vec{z}, \vec{y})}{\abs{B_{\tilde{\rho}}(\vec{z}, \tilde{\rho}(\vec{z},\vec{y}))}}, \quad\text{ if } \tilde{\rho}(\vec{z}, \vec{y})\le C_1 \,\tilde{\rho}(\vec{x},\vec{y})\\
\frac{\tilde{\rho}(\vec{x}, \vec{y})}{\abs{B_{\tilde{\rho}}(\vec{x}, \tilde{\rho}(\vec{x},\vec{y}))}}&\le& C_2\, \frac{\tilde{\rho}(\vec{y}, \vec{z})}{\abs{B_{\tilde{\rho}}(\vec{y}, \tilde{\rho}(\vec{y},\vec{z}))}}, \quad\text{ if } \tilde{\rho}(\vec{y}, \vec{z})\le C_1 \,\tilde{\rho}(\vec{x},\vec{y}).
\end{eqnarray*}
These inequalities follow from the reverse doubling property in the Carnot-Carath\'edory space given by $\Omega^m$ and $\tilde{Y}$ (see Remark \ref{reversedoubling2}) and the doubling property of Lebesgue measure on $\rho$-balls with center in $\Omega_0$.

Finally, the weight conditions \eqref{2wq>1hm} and \eqref{2wq<1hm} with $\alpha=1$ for balls in $(B,\rho)$ and with constants that do not depend on $B,$  follow from \eqref{2wq>1} and \eqref{2wq<1}, respectively, and \eqref{Bdoubling}.
\end{proof}

\end{document}